\numberwithin{equation}{section}
\newcommand{\R}{\mathbb{R}}
\newcommand{\C}{\mathbb{C}}
\newcommand{\N}{\mathbb{N}}
\newcommand{\LL}{\mathcal{L}}
\newcommand{\CC}{\mathcal{C}}
\numberwithin{equation}{section}
\newtheorem{lemma}{Lemma}[section]
\newtheorem{theorem}{Theorem}[section]
\newtheorem{remark}{Remark}[section]
\newcommand{\Extend}[5]{\ext@arrow0099{\arrowfill@#1#2#3}{#4}{#5}}
\begin{document}
\title[Heat kernel on metric cone ]{Heat kernel estimate in a conical singular space}

\author{Xiaoqi Huang}
\address{Department of Mathematics, University of Maryland, College Park, MD, 20742}
\email{xhuang49@umd.edu}

\author{Junyong Zhang}
\address{Department of Mathematics, Beijing Institute of Technology, Beijing 100081}
\email{zhang\_junyong@bit.edu.cn}

\begin{abstract} Let  $(X,g)$ be a product cone with the metric $g=dr^2+r^2h$,
where $X=C(Y)=(0,\infty)_r\times Y$ and the cross section $Y$ is
a $(n-1)$-dimensional closed Riemannian manifold $(Y,h)$. 
We study the upper boundedness of heat kernel associated with the operator $\LL_V=-\Delta_g+V_0 r^{-2}$, where
$-\Delta_g$ is the positive  Friedrichs extension Laplacian on $X$ and $V=V_0(y) r^{-2}$ and 
 $V_0\in\mathcal{C}^\infty(Y)$ is a real function such that the operator $-\Delta_h+V_0+(n-2)^2/4$ is a strictly positive operator on $L^2(Y)$.
 The new ingredient of the proof is the Hadamard parametrix and finite propagation speed of wave operator on $Y$.

\end{abstract}

\maketitle 

\section{Introduction}
Let $(Y,h)$ be a $(n-1)$-dimensional closed Riemannian manifold, we consider the product cone
 $X=C(Y)=(0,\infty)_r\times Y$ and the metric $g=dr^2+r^2h$. The product cone is an incomplete manifold, however  
one can complete it to $C^*(Y)=C(Y)\cup P$ where $P$ is its cone tip, see Cheeger \cite{C1,C2}.
Let $\Delta_g$
denote the Friedrichs' self-adjoint extension of Laplace-Beltrami operator from the domain
$\CC_c^\infty(X)$ that consist of the compactly supported smooth functions on the
interior of the metric cone. One can write 
\begin{equation}
-\Delta_g=-\partial_r^2-\frac{n-1}r\partial_r+\frac{-\Delta_h}{r^2},
\end{equation}
 where $-\Delta_h$ is the positive Laplacian on the closed Riemannian manifold $Y$, see \cite[p. 302]{CT} and \cite[Theorem 2.1]{Mo}.
The heat kernel associated with the operator $\Delta_g$ has been investigated, we refer to Mooer \cite{Mo} and Nagase\cite{Na} for asymptotic expansion,  to Li \cite{L2} for upper boundedness  and 
to Coulhon-Li \cite{CL} for lower boundedness. 
\vspace{0.1cm}

In this paper, we consider the heat kernel associated with the Schr\"odinger operator 
\begin{equation}\label{LV}
\mathcal{L}_V=-\Delta_g+V,\quad V=V_0(y) r^{-2}
\end{equation}
 where $V_0(y)$ is a smooth function on the section $Y$
such that the operator $-\Delta_h+V_0+(n-2)^2/4$ is a strictly positive operator on $L^2(Y)$ space.
The decay of the perturbation potential considered is scaling critical and is closely related to the angular momentum as $r\to\infty$,
hence the Schr\"odinger operator $\LL_V$ has attracted interest from other topics. For examples, we refer to \cite{Carron,wang} for the asymptotical behavior of the Schr\"odinger propagator,
to \cite{L1,HL} for the  Riesz transform, to \cite{ZZ1,ZZ2, GZZ, Z1} for the Strichartz estimates and the restriction estimates. In the present paper, we focus on the upper boundedness of heat kernel.
More precisely, we prove

\begin{theorem} Let $\LL_V$ be the operator on metric cone of dimension $n\geq2$ given in \eqref{LV} and suppose $\{\lambda_k,\varphi_{k}\}_{k=0}^\infty$ to be the eigenvalues and  eigenfunctions of the operator $-\Delta_h+V_0(y)+(n-2)^2/4$, which satisfies
\begin{equation}\label{eig-v}
\begin{cases}
\big(-\Delta_{h}+V_0(y)+(n-2)^2/4\big)\varphi_{k}(y)=\lambda_{k}\varphi_{k}(y),\\
\int_{Y} |\varphi_{k}(y)|^2 \,dy=1.
\end{cases}
\end{equation}
and the eigenvalues $\{\lambda_k\}_{k=0}^\infty$ enumerated  such that
\begin{equation}\label{eig-LV}
0<\lambda_0\leq \lambda_1\leq \cdots
\end{equation}
repeating each eigenvalue as many times as its multiplicity.  Then, for $t>0$ and $(r,y), (s,y')\in X$, 
 the heat kernel can been written as
\begin{align}\label{rep:heat}
  e^{-t\LL_{V}}(r,y;s,y')
 =\frac{e^{-\frac{r^2+s^2}{4t}}}{2t}(rs)^{-\frac{n-2}2}\sum_{k\in\N}\varphi_{k}(y)\overline{ \varphi_{k}(y')}I_{\mu_k}\big(\frac{rs}{2t}\big),
  \end{align}
where $\mu_k=\sqrt{\lambda_k}$ and $I_{\mu}$ is the modified Bessel function of the first kind of order $\mu$. Furthermore, 
there exist positive constants $c$ and $C$ such that
\begin{equation}\label{up-est:heat}
\begin{split}
\big|e^{-t\LL_{V}}(r,y;s,y')\big|\leq C \Big[\min\Big\{1,\Big(\frac{rs}{2t}\Big)\Big\}\Big]^{-\sigma} t^{-\frac{n}2}e^{-\frac{d^2((r,y),(s,y'))}{c t}},
  \end{split}
  \end{equation}
 where $\sigma=\frac{n-2}2-\mu_0$. Here $d((r,y),(s,y'))$ is the distance between two points $(r,y), (s,y')\in X$
 \begin{equation}\label{dist}
d((r,y),(s,y'))=\begin{cases}\sqrt{r^2+s^2-2rs\cos(d_h(y,y'))},\quad &d_h(y,y')\leq \pi;\\
r+s, &d_h(y,y')\geq \pi,\
\end{cases}
\end{equation} and  $d_h$ is the distance on the section $Y$.
  $$
 %\quad\text{and}\quad A\wedge B:=\min\{A,B\}.
$$

\end{theorem}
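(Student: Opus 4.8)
The plan is to establish the eigenfunction-expansion formula \eqref{rep:heat} first, and then extract the pointwise bound \eqref{up-est:heat} from the structure of that formula together with sharp estimates for the modified Bessel function $I_\mu$. For the representation, I would separate variables: writing any $L^2(X)$ function in the orthonormal basis $\{\varphi_k\}$ of $L^2(Y)$ associated with $-\Delta_h + V_0 + (n-2)^2/4$, the operator $\LL_V$ becomes, on the $k$-th mode, the one-dimensional Bessel-type operator
\begin{equation}
\LL_{V,k} = -\partial_r^2 - \frac{n-1}{r}\partial_r + \frac{\lambda_k - (n-2)^2/4}{r^2}
\end{equation}
acting on $L^2((0,\infty), r^{n-1}\,dr)$. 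Conjugating by $r^{(n-1)/2}$ turns this into the standard Bessel operator $-\partial_r^2 + (\mu_k^2 - 1/4) r^{-2}$ on $L^2((0,\infty), dr)$ with $\mu_k = \sqrt{\lambda_k}$; note the hypothesis $\lambda_0 > 0$ guarantees $\mu_k \geq \mu_0 > 0$, so the Friedrichs extension is the natural one and no further self-adjoint boundary condition is needed. The heat kernel of the Bessel operator is the classical Weber--Schafheitlin expression $\frac{1}{2t} e^{-(r^2+s^2)/4t} I_{\mu_k}(rs/2t)$; assembling the modes and restoring the weights $(rs)^{-(n-2)/2}$ yields \eqref{rep:heat}. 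Convergence of the series is justified by the exponential decay of $I_\mu(z)$ in $\mu$ for fixed $z$ (i.e.\ $I_\mu(z) \lesssim (z/2)^\mu / \Gamma(\mu+1)$) combined with Weyl-type bounds $\lambda_k \sim k^{2/(n-1)}$ and $\|\varphi_k\|_{L^\infty(Y)} \lesssim \lambda_k^{(n-2)/4}$.

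For the upper bound, I would split according to whether $rs/2t \leq 1$ or $rs/2t \geq 1$. In the regime $rs/2t \lesssim 1$ the exponential prefactor $e^{-(r^2+s^2)/4t}$ is already close to the Gaussian $e^{-d^2/ct}$ since $d^2((r,y),(s,y')) \geq (r-s)^2 + $ a controlled error, and one bounds the Bessel series by its leading term: $I_{\mu_k}(z) \lesssim z^{\mu_k}$ for $z \leq 1$, so $\sum_k |\varphi_k(y)\varphi_k(y')| I_{\mu_k}(rs/2t) \lesssim (rs/2t)^{\mu_0}$ after summing the rapidly convergent tail, giving the factor $(rs/2t)^{\mu_0}$, which combined with $(rs)^{-(n-2)/2}$ and $t^{-1}$ reproduces exactly $(rs/2t)^{-\sigma} t^{-n/2}$ with $\sigma = \frac{n-2}{2} - \mu_0$. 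In the regime $rs/2t \gtrsim 1$ the $\min$-factor is $1$, so we need the plain Gaussian bound $t^{-n/2} e^{-d^2/ct}$; here the Bessel asymptotics $I_\mu(z) \sim (2\pi z)^{-1/2} e^z$ turn $e^{-(r^2+s^2)/4t} I_{\mu_k}(rs/2t)$ into something like $(2\pi rs/2t)^{-1/2} e^{-(r-s)^2/4t}$, and one must sum over $k$ with uniform control of the subleading terms in the uniform (Olver) asymptotic expansion of $I_\mu$ — this is where the cross-section distance $d_h(y,y')$ and the law of cosines in \eqref{dist} enter, because the angular part of the sum $\sum_k \varphi_k(y)\overline{\varphi_k(y')}(\cdot)$ must be recognized, via the finite propagation speed of the wave equation on $Y$, as concentrating near $d_h(y,y') $ and producing the $\cos(d_h(y,y'))$ cross term in $d^2$.

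The main obstacle I anticipate is precisely this last point: controlling the sum over eigenmodes $k$ in the region $rs \gtrsim t$ uniformly in $\mu_k$, i.e.\ converting the spectral sum on $Y$ into a genuine Gaussian in the cone distance $d$. The naive termwise Bessel bound loses too much when summed; the right tool, as the abstract advertises, is the Hadamard parametrix for the wave equation $\partial_\tau^2 - \Delta_h$ on $Y$ together with finite propagation speed, which localizes the kernel of $\cos(\tau\sqrt{-\Delta_h + V_0 + (n-2)^2/4})$ to $|\tau| \geq d_h(y,y')$ and supplies the off-diagonal decay uniformly. Concretely I would represent $I_{\mu_k}(rs/2t) e^{-(r^2+s^2)/4t}$ (or rather the full kernel) as an integral over the wave propagator on $Y$ — using the subordination of the Bessel/heat kernel to the half-wave group, as in the Cheeger--Taylor functional calculus on cones — and then insert the Hadamard parametrix. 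Secondary technical points that should be routine: justifying the interchange of sum and integral, handling the cone tip $r \to 0$ or $s\to 0$ (where the $(rs)^{-(n-2)/2}$ weight is compensated by $I_{\mu_k}(rs/2t) \sim (rs/2t)^{\mu_k}$), and verifying that the constant $c$ in the exponent can be taken uniform; the case $d_h(y,y') \geq \pi$, where $d = r+s$, follows a fortiori since then $r^2 + s^2 \leq d^2$.
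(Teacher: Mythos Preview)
Your plan is essentially the same as the paper's: separation of variables plus the Weber formula for the Bessel heat kernel gives \eqref{rep:heat}; the case $rs/2t\le 1$ is handled exactly as you say by the crude bound $I_\mu(z)\lesssim (z/2)^\mu/\Gamma(\mu+\tfrac12)$ summed via Weyl asymptotics and Sogge's eigenfunction bound; and the case $rs/2t\ge 1$ is reduced to the wave kernel on $Y$ and treated by finite propagation speed together with the Hadamard parametrix.

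The one point worth sharpening is how the link to the wave propagator is made for $z=rs/2t\gtrsim1$. The paper does \emph{not} go through the uniform (Olver) large-order asymptotics of $I_\mu$; instead it uses the Schl\"afli integral representation
\[
I_\mu(z)=\frac1\pi\int_0^\pi e^{z\cos\tau}\cos(\mu\tau)\,d\tau-\frac{\sin(\mu\pi)}{\pi}\int_0^\infty e^{-z\cosh\tau}e^{-\mu\tau}\,d\tau,
\]
so that summing over $k$ converts $\sum_k\varphi_k(y)\overline{\varphi_k(y')}\cos(\mu_k\tau)$ directly into the kernel of $\cos(\tau P)$ with $P=\sqrt{-\Delta_h+V_0+(n-2)^2/4}$, and the second integral into a Poisson-type operator $e^{-\tau P}$. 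Finite propagation then restricts the first integral to $\tau\ge d_h(y,y')$, which is exactly what produces the $\cos d_h(y,y')$ in the exponent. The remaining analysis splits on whether $d_h(y,y')$ is below or above half the injectivity radius $\epsilon_0$: for small $\delta$ the Hadamard parametrix on $Y$ is inserted for $\cos(\tau P)$ and a careful stationary-phase/dyadic argument in $(\tau,\rho)$ recovers the missing $z^{-(n-2)/2}$; for $\delta\ge\epsilon_0/2$ one integrates by parts $2m$ times against $\cos(\mu_k\tau)$ to gain $\mu_k^{-2m}$ and sums crudely, accepting a harmless polynomial loss $z^N$ that is absorbed by $e^{-cz(1-\cos\delta)}$. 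Your proposal has all of these ingredients in outline, so there is no real gap --- just replace the Olver expansion by the Schl\"afli formula and you are exactly on the paper's track.
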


%\begin{remark}
%About the lower boundedness, we can do not know how prove the lower boundedness of the heat kernel $e^{-t\LL_V}$ for general $V$. However, if $V_0(y)=a\geq -(n-2)^2/4$, by following the
%argument of \cite{CT}, one can prove that there exist positive constants $c_1$ and $C_1$ such that
% \begin{equation}\label{low-est:heat}
%\begin{split}
%C_1\bigl( 1\vee \tfrac{\sqrt t}{r} \bigr)^\sigma &\bigl( 1\vee \tfrac{\sqrt t}{s} \bigr)^\sigma t^{-\frac{n}2}e^{-\frac{d^2((r,y),(s,y'))}{c_1t}}\leq  e^{-t\LL_{V}}(r,y;s,y').
%  \end{split}
%  \end{equation}
% For more discussion about the  lower boundedness, we refer to the appendix section. 
% \end{remark} 
 
 \begin{remark}
From \eqref{up-est:heat}, the square root of the smallest eigenvalue $\lambda_0$ plays a non-trivial role. 
In particular, when $Y=\mathbb{S}^{n-1}$ and $V_0(y)=a$ with $a\geq-(n-2)^2/4$, then
$$\sigma=\frac{n-2}2-\sqrt{\frac{(n-2)^2}4+a},$$
which is positive when $-(n-2)^2/4\leq a< 0$ while is nonpositive when $a\geq0$. Hence if $-(n-2)^2/4\leq a< 0$, then
$$\Big[\min\Big\{1,\Big(\frac{rs}{2t}\Big)\Big\}\Big]^{-\sigma} \leq C \Big[\min\Big\{\frac{r}{\sqrt{t}},1\Big\}\Big]^{-\sigma}\Big[\min\Big\{\frac{s}{\sqrt{t}},1\Big\}\Big]^{-\sigma}$$
together with \eqref{up-est:heat} shows
\begin{equation*}
\begin{split}
\big|e^{-t(-\Delta+a|z|^{-2})}(z,z')\big|\leq C \Big[\min\Big\{\frac{|z|}{\sqrt{t}},1\Big\}\Big]^{-\sigma}\Big[\min\Big\{\frac{|z'|}{\sqrt{t}},1\Big\}\Big]^{-\sigma} t^{-\frac{n}2}e^{-\frac{|z-z'|}{c t}},
  \end{split}
  \end{equation*}
which consists with the results of Liskevich-Sobol \cite{LS} and Milman-Semenov \cite{MS}.
  
 \end{remark} 
 
 \begin{remark}
 The proof is based on the Hadamard parametrix and finite propagation speed of wave operator on $Y$, which is a bit different from \cite{L2} due to the perturbation of the potential $V$.
 In \cite{L2}, Li used a theorem of Grigor'yan \cite[Theorem 1.1]{Grigor} which claims that if the heat kernel $H(t,r,y;s,y')$ on Remannian manifolds satisfies
on-diagonal bounds
\begin{equation*}
H(t,r,y;r,y)\lesssim t^{-\frac n2},\quad H(t,s,y';s,y')\lesssim t^{-\frac
n2},
\end{equation*} then we have
\begin{equation*}
H(t,r,y;s,y')\lesssim t^{-\frac n2}\exp\Big(-\frac{d(r,y;s,y')^2}{ct}\Big).
\end{equation*}
However, we do not know whether the analogs hold or not for the Laplacian with the perturbation of Hardy type potential.
Therefore, we have to use a different argument instead. 
 \end{remark} 
 
  \begin{remark}
 As an application of the estimate \eqref{up-est:heat} of the heat kernel, one can use the argument of \cite{KMVZZ} and the Hardy inequality in \cite{Carron}
 to establish the Littlewood-Paley theory, Sobolev embedding associated with the operator $\LL_V$. For example, we can prove the
 Mikhlin Multipliers estimates:
 Suppose $m:~[0,\infty)\to\C$ satisfies
\begin{equation}\label{E:MikhlinCond}
\big|\partial^jm(\lambda)\big|\lesssim\lambda^{-j}\quad\text{for all} \quad j\geq 0
\end{equation}
Then $m(\sqrt{\LL_V})$ is a bounded operator on $L^p(\R^n)$ provided
that either\\[1mm]
\hspace*{0.5em}$\bullet$\ $\mu_0>(n-2)/2$ and $1<p<\infty$, or\\[1mm]
\hspace*{0.5em}$\bullet$\ $0<\mu_0\leq \frac{n-2}2$ and $p_0<p<p_0':=\tfrac{n}\sigma$.\\[1mm]

 \end{remark} 
\noindent\textbf{Acknowledgement}. The authors would like to thank Hongquan Li for his discussions and comments that improved the exposition. JZ acknowledges support from National Natural Science Foundation of China (12171031,11831004).

\section{Construction of the heat kernel }

In this section, in spirt of functional calculus in Cheeger-Taylor \cite{CT,CT1},  we 
construct the heat kernel associated with the operator $\LL_{V}$.
Since the metric $g=dr^2+r^2h(y,dy)$, we write the operator $\mathcal{L}_V$ in the coordinate $(r,y)$ , 
\begin{equation}\label{operator-t}
\mathcal{L}_V=-\partial^2_r-\frac{n-1}r\partial_r+\frac1{r^2}\big(-\Delta_h+V_0(y)\big).
\end{equation}
where $\Delta_h$ is the Laplace-Beltrami operator on
$(Y,h)$.

From classical spectral theory, the spectrum of $-\Delta_{h}+V_0(y)+(n-2)^2/4$ is formed by a countable family of real eigenvalues $\{\lambda_k\}_{k=0}^\infty$ enumerated  such that
\begin{equation}\label{eig-LV}
0<\lambda_0\leq \lambda_1\leq \cdots
\end{equation}
where we repeat each eigenvalue as many times as its multiplicity, and $\lim_{k\to\infty}\lambda_k=+\infty$.
Let $\{\varphi_{k}(y)\}$ be the eigenfunctions of $-\widetilde{\Delta}_h=-\Delta_{h}+V_0(y)+(n-2)^2/4$, that
is
\begin{equation}\label{eig-v}
\begin{cases}
\big(-\Delta_{h}+V_0(y)+(n-2)^2/4\big)\varphi_{k}(y)=\lambda_{k}\varphi_{k}(y),\\
\int_{Y} |\varphi_{k}(y)|^2 \,dy=1.
\end{cases}
\end{equation}
%then 
%\begin{equation}\label{eig-orth}
%\langle \varphi_{\nu,\ell}(y),\,\varphi_{\nu,\ell'}(y)\rangle_{L^2(Y)}=\delta_{\ell,\ell'}= \begin{cases} 1, \quad \ell=\ell'\\ 0, \quad \ell\neq \ell'.
%\end{cases}
%\end{equation}
Therefore, we obtain an orthogonal decomposition of the $L^2(Y)$ in a sense that
\begin{equation*}
L^2(Y)=\bigoplus_{k\in \N} \mathcal{H}^{k},\qquad \N=\{0,1, 2, \cdots \}.
\end{equation*}
where $$\mathcal{H}^{k}=\text{span}\{\varphi_{k}\}.$$
 Define the orthogonal projection $\pi_{k}$ of $f$ onto $\mathcal{H}^{k}$ by 
\begin{equation}\label{pro}
\pi_{k}f=\int_{Y}f(r,y') H_k(y,y')dh ,\qquad f\in L^2(X),
\end{equation}
where $dh$ is the measure on $Y$ under the
metric $h$ and
the kernel 
\begin{equation}\label{ker:Hk}
H_k(y,y')= \varphi_{k}(y)\overline{ \varphi_{k}(y')},\quad %
\end{equation}
then
\begin{equation}
f(r,y)=\sum_{k\in\N} \pi_k f=\sum_{k\in\N}a_{k}(r)\varphi_{k}(y),\quad a_{k}(r)=\int_Y f(r,y') \overline{\varphi_{k}(y')} \,dh.
\end{equation}
Let $\mu=\mu_k=\sqrt{\lambda_k}$,  for $f\in L^2(X)$, as \cite[Page 523]{BPSS}, we define the Hankel transform of order $\mu$
\begin{equation}\label{hankel}
(\mathcal{H}_{\mu}f)(\rho, y)=\int_0^\infty (r\rho)^{-\frac{n-2}2}J_{\mu}(r\rho)f(r, y) \,r^{n-1}dr,
\end{equation}
where the Bessel function of order $\mu$ is given by
\begin{equation}\label{Bessel}
J_{\mu}(r)=\frac{(r/2)^{\mu}}{\Gamma\left(\mu+\frac12\right)\Gamma(1/2)}\int_{-1}^{1}e^{isr}(1-s^2)^{(2\mu-1)/2} ds, \quad \mu>-1/2, r>0,
\end{equation}
which satisfies the following equation
\begin{equation*}
r^2\frac{d^2}{dr^2}( J_{\mu}(r))+r\frac{d}{dr} (J_{\mu}(r))+(r^2-\mu^2)J_{\mu}(r)=0.
\end{equation*}

%The Hankel transform satisfies the following properties (see \cite{BPSS}):
%\begin{lemma}\label{Lem:Hankel}
%Let $\mathcal{H}_{\nu}$ be defined as above  and $A_{\nu}:=-\partial_r^2-\frac{1}r\partial_r+\frac{\nu^2}{r^2}$. Then
%
%$(\rm{i})$ $\mathcal{H}_{\nu}=\mathcal{H}_{\nu}^{-1}$,
%
%$(\rm{ii})$ $\mathcal{H}_{\nu}$ is self-adjoint, i.e.
%$\mathcal{H}_{\nu}=\mathcal{H}_{\nu}^*$,
%
%$(\rm{iii})$ $\mathcal{H}_{\nu}$ is an $L^2$ isometry, i.e.
%$\|\mathcal{H}_{\nu}\phi\|_{L^2_\xi}=\|\phi\|_{L^2_x}$,
%
%$(\rm{iv})$ $\mathcal{H}_{\nu}(
%A_{\nu}\phi)(\xi)=|\xi|^2(\mathcal{H}_{\nu} \phi)(\xi)$, for
%$\phi\in L^2$.
%\end{lemma}
Following the \cite[(8.45) ]{Taylor}, for well-behaved functions $F$, we have by
 the functional calculus
\begin{equation}\label{funct}
F(\mathcal{L}_{V}) f(r,y)= \int_0^\infty \int_0^{2\pi} K(r, y, s, y') f(s, y')\; s^{n-1}\;ds\;dh(y')
\end{equation}
 the kernel
$$K(r, y, s, y')=(rs)^{-\frac{n-2}2}\sum_{k\in\N}\varphi_{k}(y)\overline{ \varphi_{k}(y')}K_{\mu_k}(r, s)=(rs)^{-\frac{n-2}2}\sum_{k\in\N}H_k(y,y')K_{\mu_k}(r, s),$$
and
\begin{equation}\label{equ:knukdef}
  K_{\mu_k}(r, s)=\int_0^\infty F(\rho^2) J_{\mu_k}(r\rho)J_{\mu_k}(s\rho) \,\rho d\rho.
\end{equation}
In particular, $F(\rho^2)=e^{-t\rho^2}$, by using the Weber's second exponential integral \cite[Section 13.31 (1)]{Watson},
we obtain 
\begin{equation}\label{equ:knukdef}
  K_{\mu_k}(r, s)=\int_0^\infty e^{-t\rho^2} J_{\mu_k}(r\rho)J_{\mu_k}(s\rho) \,\rho d\rho
  =(2t)^{-1}e^{-\frac{r^2+s^2}{4t}}I_\mu\big(\frac{rs}{2t}\big),\qquad t>0.
\end{equation}
where $I_\mu(x)$ is the modified Bessel function of the first kind in series version
\begin{equation*}
\begin{split}
I_\mu(x)=\sum_{j=0}^\infty \frac{1}{j!\Gamma(\mu+j+1)}\big(x/2\big)^{\mu+2j},
\end{split}
\end{equation*}
or in the integral representation
\begin{equation*}
\begin{split}
I_\mu(x)=\frac1{\sqrt{\pi}\Gamma(\mu+\frac12)}\big(\frac x2\big)^{\mu}\int_{-1}^{1} e^{-x\tau}(1-\tau^2)^{\mu-\frac12}\,d\tau.
\end{split}
\end{equation*}
Therefore we obtain \eqref{rep:heat}
\begin{align*}
  e^{-t\LL_{V}}(r,y;s,y')
 =(2t)^{-1}e^{-\frac{r^2+s^2}{4t}}(rs)^{-\frac{n-2}2}\sum_{k\in\N}\varphi_{k}(y)\overline{ \varphi_{k}(y')}I_{\mu_k}\big(\frac{rs}{2t}\big).
  \end{align*}

\section{The proof of the upper boundedness}
In this section, we prove the upper boundedness  \eqref{up-est:heat}. To this end, by observing \eqref{dist} and scaling $(r,s)$, it suffices to show
\begin{equation}\label{est:up}
\begin{split}
\big| e^{-\frac{r^2+s^2}{4}}(rs)^{-\frac{n-2}2}\sum_{k\in\N}\varphi_{k}(y)\overline{ \varphi_{k}(y')}&I_{\mu_k}\big(\frac{rs}{2}\big)\big|\\
&\leq
C\Big[\min\Big\{1,\Big(\frac{rs}{2}\Big)\Big\}\Big]^{-\sigma} e^{-\frac{d^2((r,y),(s,y'))}{c}},
  \end{split}
 \end{equation}
%Notice the fact that \textcolor{red} {$\sigma=\frac{n-2}2-\mu_0>0$}, we have
%\begin{equation}
%\bigl( 1\vee (rs)^{-\sigma}\bigr)\lesssim \bigl( 1\vee \tfrac{1}{r} \bigr)^\sigma \bigl( 1\vee \tfrac{1}{s} \bigr)^\sigma.
%\end{equation}
%Recall the distance \eqref{dist}, to prove \eqref{est:up}, it is enough to prove that
which is the consequence of the following lemma
\begin{lemma}\label{lem:key}  Let $z=\frac12 rs$, $\sigma=\frac{n-2}2-\mu_0$ and $\delta=d_h(y,y')$, then
there exist constants $C$ and $N$ only depending on $n$ such that 

$\bullet$ either for $0<z\leq 1$,
\begin{equation}\label{est:up<}
\begin{split}
\big| z^{-\frac{n-2}2}\sum_{k\in\N}\varphi_{k}(y)\overline{ \varphi_{k}(y')}&I_{\mu_k}(z)\big|
\leq
C z^{-\sigma} \times
\begin{cases}
e^{z\cos\delta},\quad 0\leq \delta \leq \pi,\\ 
1\qquad\qquad \pi\leq \delta,
\end{cases}
  \end{split}
 \end{equation}

$\bullet$ or for $z\gtrsim1$ 
\begin{equation}\label{est:up>}
\begin{split}
\big| z^{-\frac{n-2}2}\sum_{k\in\N}\varphi_{k}(y)\overline{ \varphi_{k}(y')}&I_{\mu_k}\big(z\big)\big|
\leq
C \times
\begin{cases}
e^{z\cos\delta}+z^N e^{z\cos(\frac12\epsilon_0)},\quad 0\leq \delta \leq \frac12\epsilon_0,\\ 
z^N e^{z\cos\delta},\quad \frac12\epsilon_0\leq \delta \leq \pi,\\ 
e^{\frac z2}\qquad\qquad\quad \pi\leq \delta,
\end{cases}
  \end{split}
 \end{equation}
where $\epsilon_0$ is the injective radius of the manifold $Y$ with $0<\epsilon_0\leq\pi$. 

\end{lemma}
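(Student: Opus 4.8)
The plan is to analyze the spectrally expanded sum
$$
S(z,y,y')=z^{-\frac{n-2}2}\sum_{k\in\N}\varphi_k(y)\overline{\varphi_k(y')}I_{\mu_k}(z)
$$
by rewriting the modified Bessel function $I_{\mu_k}(z)$ via its Laplace-type integral representation in terms of $\cos\delta$, so that the $k$-sum becomes (a cosine transform of) the wave kernel $\cos(\tau\sqrt{-\widetilde\Delta_h})$ on $Y$, where $-\widetilde\Delta_h=-\Delta_h+V_0+(n-2)^2/4$. Concretely, using $I_\mu(z)=\frac{1}{2\pi i}\oint e^{z\cosh w}e^{-\mu w}\,dw$ (or the real integral $I_\mu(z)=\frac1\pi\int_0^\pi e^{z\cos\theta}\cos(\mu\theta)\,d\theta-\frac{\sin(\mu\pi)}{\pi}\int_0^\infty e^{-z\cosh t-\mu t}\,dt$), I expect to express $\sum_k\varphi_k(y)\overline{\varphi_k(y')}e^{-\mu_k w}$ in terms of the half-wave propagator $e^{-w\sqrt{-\widetilde\Delta_h}}$ or its boundary values. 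The key structural input is the \emph{finite propagation speed} of $\cos(\tau\sqrt{-\widetilde\Delta_h})$ on the closed manifold $Y$: its Schwartz kernel is supported in $\{d_h(y,y')\le|\tau|\}$. This is what produces the dichotomy $\delta\le\tfrac12\epsilon_0$ versus $\delta\ge\tfrac12\epsilon_0$ versus $\delta\ge\pi$ in \eqref{est:up>} — for $\tau$ below the injectivity radius one has the genuine Hadamard parametrix, while the contributions from $\tau$ near or beyond $\epsilon_0$ are controlled separately and are responsible for the $e^{z\cos(\epsilon_0/2)}$ term.

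\textbf{Step 1 (Hadamard parametrix on $Y$).} Using the Hadamard parametrix for the wave operator associated to $-\widetilde\Delta_h$ on $Y$, valid for times $|\tau|\le\epsilon_0$, I would write $\cos(\tau\sqrt{-\widetilde\Delta_h})(y,y')$ as a sum of a main term built from the Riemannian distance $d_h(y,y')=\delta$ (an oscillatory/homogeneous distribution in $\tau^2-\delta^2$ with smooth coefficients) plus a smoothing remainder whose kernel is $C^\infty$ and bounded together with finitely many derivatives by constants depending only on $n$ and $Y$. Feeding this into the integral representation of $I_{\mu_k}$ and summing over $k$, the main term reassembles — after the $\tau$- (or $w$-) integration — into something comparable to the \emph{free} modified Bessel expression on the model cone, which is exactly $(\text{const})\,z^{-\frac{n-2}2+\mu_0}\,$ times an exponential in $z\cos\delta$; this is where the exponent $\sigma=\frac{n-2}2-\mu_0$ and the factor $e^{z\cos\delta}$ come from. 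The small-$z$ bound \eqref{est:up<} then follows by examining this leading behavior directly from the series $I_\mu(z)=\sum_j\frac{(z/2)^{\mu+2j}}{j!\Gamma(\mu+j+1)}$ together with Weyl-type bounds $\lambda_k\sim k^{2/(n-1)}$, $\|\varphi_k\|_{L^\infty}\lesssim\lambda_k^{(n-2)/4}$ ensuring absolute convergence; the worst term is $k=0$, giving the $z^{-\sigma}$ prefactor, and the dichotomy $\delta\le\pi$ vs $\delta\ge\pi$ reflects whether $\cos\delta$ in the model-cone formula should be replaced by its value at the cut locus.

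\textbf{Step 2 (remainder and large-$z$ estimate).} For $z\gtrsim1$ I would split the $w$- (equivalently $\tau$-) contour/integration into the region $|\tau|\le\tfrac12\epsilon_0$, handled by Step 1, and the region $|\tau|\ge\tfrac12\epsilon_0$. On the latter, by finite propagation speed the wave kernel still vanishes unless $\delta\le|\tau|$, and one estimates the Bessel integrand crudely: $e^{z\cos\theta}$ is maximized subject to the constraint, producing $e^{z\cos(\epsilon_0/2)}$ when $\delta\le\tfrac12\epsilon_0$ and $e^{z\cos\delta}$ when $\tfrac12\epsilon_0\le\delta\le\pi$, while the polynomial loss $z^N$ absorbs the $\tau$-measure, the spectral-sum multiplicities, and the finitely many derivatives falling on the Hadamard coefficients (a standard stationary-phase / integration-by-parts count gives $N=N(n)$). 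For $\delta\ge\pi$, i.e.\ past the full diameter threshold on the cone, the distance degenerates to $r+s$ so $\cos\delta$ is effectively replaced by $-1$ and only the trivial bound $I_{\mu_k}(z)\le I_0(z)\lesssim e^{z}$ — or more carefully $e^{z/2}$ after accounting for the $e^{-(r^2+s^2)/4}$ prefactor that will be reinstated in \eqref{est:up} — is needed.

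\textbf{Main obstacle.} The hard part will be making the passage from the spectral sum to the wave kernel rigorous uniformly in $z$ while controlling the remainder: one must justify interchanging $\sum_k$ with the $\tau$-integral (convergence via Weyl asymptotics and the rapid decay/oscillation of the Bessel integrand), and, more delicately, track how the Hadamard remainder term — which on $Y$ is smoothing but \emph{not} compactly supported in $\tau$ — contributes after integration against $e^{z\cos\theta}$, since that is precisely what forces the extra $z^Ne^{z\cos(\epsilon_0/2)}$ term rather than a clean $e^{z\cos\delta}$ bound. Equivalently, one needs quantitative control, with constants depending only on $n$, on finitely many derivatives of the subprincipal Hadamard coefficients and on the wave kernel just beyond the injectivity radius; this is the technical heart and is exactly the place where the geometry of $(Y,h)$ (its injectivity radius $\epsilon_0$ and curvature bounds) enters the constants $C$ and $N$.
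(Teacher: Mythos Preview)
Your overall strategy matches the paper's: the integral representation
\[
I_\mu(z)=\frac1\pi\int_0^\pi e^{z\cos\tau}\cos(\mu\tau)\,d\tau-\frac{\sin(\mu\pi)}{\pi}\int_0^\infty e^{-z\cosh\tau-\mu\tau}\,d\tau,
\]
interpreting the $k$-sum as the wave kernel $\cos(\tau P)$ for $P=\sqrt{-\widetilde\Delta_h}$, finite propagation speed, a cutoff at the injectivity radius, and the Hadamard parametrix for $|\tau|<\epsilon_0$. The small-$z$ case \eqref{est:up<} is indeed handled exactly as you say, by the crude bound $I_\mu(z)\lesssim (z/2)^\mu e^z/\Gamma(\mu+\tfrac12)$ together with $\|\varphi_k\|_\infty^2\lesssim(1+\lambda_k)^{(n-2)/2}$ and Weyl's law $\mu_k\sim(1+k)^{1/(n-1)}$; no parametrix is needed there. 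For large $z$ with $\delta<\epsilon_0/2$ the paper does not ``reassemble'' the Hadamard main term into a model-cone Bessel expression as you suggest; rather it carries out a direct oscillatory-integral estimate (a careful case split in $z\delta^2$ and dyadic decomposition in $\tau-\delta$, then integration by parts in $\tau$ against $\cos(\tau\rho)$), but your alternative would also work and is arguably more conceptual.

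There is, however, a genuine gap in your treatment of the case $\delta\ge\pi$. The ``trivial bound'' $I_{\mu_k}(z)\le I_0(z)\lesssim e^z/\sqrt{z}$ gives no decay in $k$, so $\sum_k|\varphi_k(y)\overline{\varphi_k(y')}|\,I_0(z)$ diverges; and even if you split at $\mu_k\sim z$ and use uniform Bessel asymptotics, the best you get is $z^N e^{z}$, which after reinstating $e^{-(r^2+s^2)/4}$ yields only $e^{-(r-s)^2/4}$ --- useless when $r=s$. Any bound $z^N e^{\alpha z}$ with $\alpha<1$ would suffice, but the pointwise Bessel route cannot produce $\alpha<1$. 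The paper's fix is the one piece of structure you did not invoke here: when $\delta>\pi$, finite propagation speed kills the \emph{entire} first integral $\int_0^\pi e^{z\cos\tau}\cos(\tau P)(y,y')\,d\tau$ (since the wave kernel vanishes for $\tau<\delta$ and $\tau\le\pi<\delta$). Only the second, hyperbolic integral $\int_0^\infty e^{-z\cosh\tau}e^{-\tau P}\,d\tau$ survives; after $2m$ integrations by parts this carries a factor $\mu_k^{-2m}$ (giving summability) times $z^{2m}e^{-z}$, which is far better than $e^{z/2}$.
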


\begin{remark} If the injective radius $\epsilon_0$ of the manifold $Y$ is larger than $\pi$, we do not need to introduce the cutoff function $\chi$ in the {\bf Step 1} below, and we can 
follow the same argument to obtain, for $z\gtrsim1$ 
\begin{equation}\label{est:up>'}
\begin{split}
\big| z^{-\frac{n-2}2}\sum_{k\in\N}\varphi_{k}(y)\overline{ \varphi_{k}(y')}&I_{\mu_k}\big(z\big)\big|
\leq
C \times
\begin{cases}
e^{z\cos\delta},\qquad 0\leq \delta \leq \pi,\\ 
e^{\frac z2}\qquad\qquad\quad \pi\leq \delta,
\end{cases}
  \end{split}
 \end{equation}
which is better than \eqref{est:up>}. Hence we omit this easier case in the following argument. 
\end{remark}

Now we assume Lemma \ref{lem:key} to prove \eqref{est:up}. We divide into two cases $z\leq 1$ and $z\geq 1$ where $z=\frac{rs}2$.
When $z\leq 1$, we recall the distance \eqref{dist} and put \eqref{est:up<} into the left hand side of \eqref{est:up} to obtain
\begin{equation}\label{est:up1}
\begin{split}
\big| e^{-\frac{r^2+s^2}{4}}(rs)^{-\frac{n-2}2}\sum_{k\in\N}\varphi_{k}(y)\overline{ \varphi_{k}(y')}&I_{\mu_k}\big(\frac{rs}{2}\big)\big|\leq
C\Big(\frac{rs}{2}\Big)^{-\sigma} e^{-\frac{d^2((r,y),(s,y'))}{c}}.
  \end{split}
 \end{equation}

Next we consider the case that $z\geq 1$.
In the subcase
that $0\leq \delta \leq \frac12\epsilon_0$ of \eqref{est:up>}, we have
\begin{equation*}
\begin{split}
\text{LHS of}\, \eqref{est:up}
&\lesssim  e^{-\frac{r^2-2rs\cos\delta+s^2}4}+z^{N}e^{-\frac{r^2-2rs\cos(\frac12\epsilon_0)+s^2}4}
\\&\lesssim e^{-\frac{r^2-2rs\cos\delta+s^2}8}\Big(1+z^{N} e^{-\frac z4(1-\cos(\frac12\epsilon_0))} \Big)\lesssim e^{-\frac{d^2((r,y),(s,y'))}{c}}.
  \end{split}
 \end{equation*}
In the subcase
that $\frac12\epsilon_0\leq \delta\leq \pi$ of \eqref{est:up>}, we have
\begin{equation*}
\begin{split}
&\text{LHS of}\, \eqref{est:up}
\lesssim z^{N} e^{-\frac{r^2-2rs\cos\delta+s^2}4}\lesssim z^{N} e^{-\frac z4(1-\cos\delta)} e^{-\frac{r^2-2rs\cos\delta+s^2}8}.
  \end{split}
 \end{equation*}
 Since $\frac12\epsilon_0\leq \delta\leq \pi$, one has $1-\cos\delta\geq \epsilon^2/4>0$. Hence, for $z\geq 1$, no matter how large $N$ is,  there exists a constant $C$ independent of $z$ such that 
 \begin{equation*}
\begin{split}
&\text{LHS of}\, \eqref{est:up}
 \leq Ce^{-\frac{r^2-2rs\cos\delta+s^2}8}\lesssim e^{-\frac{d^2((r,y),(s,y'))}{c}}.
  \end{split}
 \end{equation*}
 In the last subcase $\delta\geq \pi$, we have
  \begin{equation*}
\begin{split}
&\text{LHS of}\, \eqref{est:up}
 \leq Ce^{-\frac{r^2+s^2}8}e^{-\frac{r^2-2rs+s^2}8}\leq Ce^{-\frac{(r+s)^2}{16}}.
  \end{split}
 \end{equation*}
Therefore we prove \eqref{est:up} once we could prove \eqref{est:up<} and \eqref{est:up>}, which are our main tasks from now on.
To this end, 
we first claim that 
 \begin{equation}\label{est:Hk}
\begin{split}
|H_k(y,y')|\leq \big\|\varphi_{k}(y)\big\|^2_{L^\infty(Y)}\leq C (1+\lambda_k)^{\frac{n-2}2}\leq C  (1+k)^{\frac{n-2}{n-1}},
  \end{split}
 \end{equation}
where we used the eigenfunction estimate (see \cite[(3.2.5)-(3.2.6)]{Sogge}) and the Weyl’s asymptotic formula (e.g. see \cite{Yau})
 \begin{equation}
\lambda_k\sim (1+k)^{\frac 2{n-1}},\quad k\geq 1,\implies \mu_k \sim (1+k)^{\frac 1{n-1}}.
 \end{equation}
 
 \subsection{The proof of  \eqref{est:up<}} For the case $0\leq \delta\leq\pi$, we first notice that $e^{z}\lesssim e^{z\cos\delta}$ if $0\leq z\leq 1$, then
 the modified Bessel function satisfies 
\begin{equation}\label{est:rough-b}
|I_{\mu}(z)|\leq \sqrt{\pi} e^z \frac{(\frac z2)^{\mu}}{\Gamma(\mu+\frac12)}\lesssim e^{z\cos\delta} \frac{z^{\mu_0}}{2^{\mu}\Gamma(\mu+\frac12)},\quad \mu\geq \mu_0.
\end{equation}
Therefore, from \eqref{est:Hk}, we have 
\begin{equation*}
\begin{split}
&\text{LHS of}\, \eqref{est:up<}
\lesssim z^{-\frac{n-2}2}e^{z\cos\delta}\sum_{k\in\N} (1+k)^{\frac{n-2}{n-1}} \frac{z^{\mu_0}}{2^{\mu_k}\Gamma(\mu_k+\frac12)}.
  \end{split}
 \end{equation*}
 Note that $\mu_k\geq \mu_0$ and $\mu_k=\sqrt{\lambda_k}\sim (1+k)^{\frac 1{n-1}}$, then the summation converges.  Hence we show
 \begin{equation}
\begin{split}
\text{LHS of}\, \eqref{est:up<}\lesssim e^{z\cos\delta}z^{\mu_0-\frac{n-2}2},\quad 0\leq \delta\leq\pi,
  \end{split}
 \end{equation}
as desired. For the case that $\delta \geq \pi$,  we replace \eqref{est:rough-b} by
\begin{equation*}
|I_{\mu}(z)|\leq \sqrt{\pi} e^z \frac{(\frac z2)^{\mu_0}}{\Gamma(\mu+\frac12)}\lesssim  \frac{z^{\mu_0}}{2^{\mu}\Gamma(\mu+\frac12)},\quad \mu\geq \mu_0.
\end{equation*}
Therefore, as before the summation converges, we obtain
 \begin{equation*}
\begin{split}
\text{LHS of}\, \eqref{est:up<}\lesssim z^{\mu_0-\frac{n-2}2}.
  \end{split}
 \end{equation*}
which complete the proof of \eqref{est:up<}.\vspace{0.2cm}

 \subsection{The proof of  \eqref{est:up>}}  In this case, we need
 the integral representation (see \cite{Watson}, \cite[p. 419]{Na} or \cite[(10.32.4)]{DIP}) of the modified Bessel function 
\begin{equation}\label{m-bessel}
I_\mu(z)=\frac1{\pi}\int_0^\pi e^{z\cos(\tau)} \cos(\mu \tau) d\tau-\frac{\sin(\mu\pi)}{\pi}\int_0^\infty e^{-z\cosh \tau} e^{-\tau\mu} d\tau.
\end{equation}
We divide into three steps to prove \eqref{est:up>}.

{\bf Step 1:} we consider the case $0\leq \delta\leq\frac12\epsilon_0$ which is the most difficult case. We first introduce a function $\chi\in C_c^\infty([0,\pi])$ such that
 \begin{equation}\label{chi}
\chi(\tau)=
\begin{cases}1,\quad \tau\in [0, \frac12\epsilon_0],\\
0, \quad \tau\in [ \epsilon_0, \pi].
\end{cases}
\end{equation}
\begin{lemma}\label{lem:in-parts} For fixed $0\leq\delta\leq\pi$ and $m\geq0$, let $\chi$ be in \eqref{chi}, we have
\begin{equation}\label{in-parts}
\begin{split}
&\frac1{\pi}\int_0^\pi e^{z(\cos \tau-\cos\delta)} \cos(\mu \tau) d\tau-\frac{\sin(\mu\pi)}{\pi}\int_0^\infty e^{-z(\cosh \tau+\cos\delta)} e^{-\tau\mu} d\tau\\
&=\frac1{\pi}\int_0^\pi e^{z(\cos \tau-\cos\delta)} \chi(\tau)\cos(\mu \tau) d\tau\\
&\quad+ \frac{(-1)^{m}}{\pi}\int_0^\pi \Big(\frac{\partial}{\partial \tau}\Big)^{2m}\big( e^{z(\cos \tau-\cos\delta)}(1-\chi(\tau))\big) \frac{ \cos(\mu \tau)}{\mu^{2m}} d\tau
\\&\qquad-\frac{\sin(\mu\pi)}{\pi}\int_0^\infty \Big(\frac{\partial}{\partial \tau}\Big)^{2m}\big(  e^{-z(\cosh \tau+\cos\delta)}\big) \frac{e^{-\tau\mu}}{\mu^{2m}} d\tau.
\end{split}
\end{equation}
\end{lemma}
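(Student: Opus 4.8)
The plan is to prove the identity by repeated integration by parts, exploiting the fact that differentiating $\cos(\mu\tau)$ twice reproduces $-\mu^2\cos(\mu\tau)$ and differentiating $e^{-\tau\mu}$ twice reproduces $\mu^2 e^{-\tau\mu}$, so that each double application of $(\partial/\partial\tau)^2$ trades a factor $\mu^{\mp 2}$ onto the other factor in the integrand. The left-hand side is just the integral representation \eqref{m-bessel} for $I_\mu(z)$, multiplied through by $e^{-z\cos\delta}$; no convergence issue arises because $Y$ being closed forces all $\mu_k\ge\mu_0>0$, and the second integral on each side is absolutely convergent thanks to the $e^{-z\cosh\tau}$ factor.

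First I would split the first integral on the left using $1=\chi(\tau)+(1-\chi(\tau))$. The $\chi$-piece is kept untouched — it is the first term on the right-hand side — and all the integration by parts is performed on the $(1-\chi)$-piece. For that piece, write $\cos(\mu\tau)=-\mu^{-2}\,(\partial/\partial\tau)^2\cos(\mu\tau)$ and integrate by parts twice in $\tau$ over $[0,\pi]$. The boundary terms at $\tau=\pi$ involve $\sin(\mu\pi)$ and $\cos(\mu\pi)$ with the amplitude $e^{z(\cos\tau-\cos\delta)}(1-\chi(\tau))$ and its $\tau$-derivative evaluated at $\pi$; the boundary terms at $\tau=0$ vanish because $1-\chi\equiv 0$ near $0$, and also because $\sin(\mu\tau)|_{\tau=0}=0$. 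Iterating this $m$ times produces the factor $(-1)^m\mu^{-2m}$ in front of $(\partial/\partial\tau)^{2m}\big(e^{z(\cos\tau-\cos\delta)}(1-\chi(\tau))\big)$, which is the second term on the right. Similarly, in the second integral on the left I would use $e^{-\tau\mu}=\mu^{-2}(\partial/\partial\tau)^2 e^{-\tau\mu}$ and integrate by parts twice over $[0,\infty)$; here the amplitude $e^{-z(\cosh\tau+\cos\delta)}$ and all its $\tau$-derivatives decay at $\tau=\infty$, so only the $\tau=0$ boundary terms survive, and again $\sin(\mu\tau)$-type boundary contributions vanish at $0$. Iterating $m$ times gives $\mu^{-2m}(\partial/\partial\tau)^{2m}\big(e^{-z(\cosh\tau+\cos\delta)}\big)$, the third term on the right.

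The point that needs care — and is the main bookkeeping obstacle — is showing that all the boundary terms generated in the two integrations by parts cancel exactly, so that no extra terms appear beyond the three written in \eqref{in-parts}. Concretely, each double integration by parts in the first (finite-interval) integral throws off a term of the form $\tfrac1\pi\,e^{z(\cos\pi-\cos\delta)}\,\partial_\tau^{j}(1-\chi)\cdots$ together with a $\tfrac{\sin(\mu\pi)}{\pi}\mu^{-1}$ boundary term, and one checks that the accumulated $\sin(\mu\pi)$-boundary terms from the first integral are precisely what is needed to match the correction boundary terms coming out of the integration by parts applied to the second integral with amplitude $e^{-z(\cosh\tau+\cos\delta)}$ at $\tau=0$ (note $\cosh 0 = 1$ and $\cos\pi=-1$ have opposite signs, which is exactly the sign needed for cancellation against the $\cosh$-integral's endpoint). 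I would organize this by verifying the $m=1$ case in full detail — writing out both double integrations by parts and checking term-by-term cancellation — and then the general $m$ follows by an immediate induction, reapplying the $m=1$ computation to the new amplitudes. Since $\chi$ and all its derivatives vanish identically on a neighborhood of $0$ and on $[\epsilon_0,\pi]$, the amplitude $e^{z(\cos\tau-\cos\delta)}(1-\chi(\tau))$ is smooth and compactly supported in $(0,\pi]$ with the endpoint $\tau=0$ contributing nothing, which keeps the induction clean.
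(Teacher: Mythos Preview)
Your proposal is correct and follows essentially the same route as the paper: split off the $\chi$-piece, perform two integrations by parts at a time on the $(1-\chi)$-integral over $[0,\pi]$ and on the $\cosh$-integral over $[0,\infty)$, verify the $m=1$ case by checking that the boundary contributions at $\tau=\pi$ and $\tau=0$ cancel (using $\cos\pi=-1$, $\cosh 0=1$, $\sin\pi=\sinh 0=0$), and then induct on $m$. One small wording slip: near the end you write ``$\chi$ and all its derivatives vanish identically on a neighborhood of $0$,'' but of course it is $1-\chi$ (not $\chi$) that vanishes near $0$; the substance of your argument is unaffected.
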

\begin{proof}This is a variant of \cite[(5.30)]{Na}. We prove this lemma by using induction argument and the argument of \cite{Na}. We first verify $m=1$. By integration by parts, we have
\begin{equation*}
\begin{split}
 &\frac1{\pi}\int_0^\pi e^{z(\cos \tau-\cos\delta)} \cos(\mu \tau) d\tau-\frac{\sin(\mu\pi)}{\pi}\int_0^\infty e^{-z(\cosh \tau+\cos\delta)} e^{-\tau\mu} d\tau\\
& =\frac1{\pi}\int_0^\pi e^{z(\cos \tau-\cos\delta)} \chi(\tau)\cos(\mu \tau) d\tau+\frac1{\pi}\big( e^{z(\cos \tau-\cos\delta)}(1-\chi(\tau))\big) \frac{ \sin(\mu \tau)}{\mu}\Big|_{\tau=0}^{\tau=\pi} \\
&\quad+ \frac{(-1)}{\pi}\int_0^\pi \Big(\frac{\partial}{\partial \tau}\Big)\big( e^{z(\cos \tau-\cos\delta)}(1-\chi(\tau))\big) \frac{ \sin(\mu \tau)}{\mu} d\tau
\\&+\frac{\sin(\mu\pi)}{\pi}\big(  e^{-z(\cosh \tau+\cos\delta)}\big) \frac{e^{-\tau\mu}}{\mu}\Big|_{\tau=0}^\infty
-\frac{\sin(\mu\pi)}{\pi}\int_0^\infty \Big(\frac{\partial}{\partial \tau}\Big)\big(  e^{-z(\cosh \tau+\cos\delta)}\big) \frac{e^{-\tau\mu}}{\mu} d\tau.
\end{split}
\end{equation*}
Note the boundary term 
\begin{equation*}
\begin{split}
&\frac1{\pi}\big( e^{z(\cos \tau-\cos\delta)}(1-\chi(\tau))\big) \frac{ \sin(\mu \tau)}{\mu}\Big|_{\tau=0}^{\tau=\pi} 
+\frac{\sin(\mu\pi)}{\pi}\big(  e^{-z(\cosh \tau+\cos\delta)}\big) \frac{e^{-\tau\mu}}{\mu}\Big|_{\tau=0}^\infty\\
&=\frac1{\pi}e^{z(\cos \tau-\cos\delta)} \frac{ \sin(\mu \tau)}{\mu}\Big|_{\tau=\pi}-
\frac{\sin(\mu\pi)}{\pi}\big(  e^{-z(\cosh\tau+\cos\delta)}\big) \frac{e^{-\tau\mu}}{\mu}\Big|_{\tau=0}=0.
\end{split}
\end{equation*}
By integration by parts again, we have
\begin{equation*}
\begin{split}
 &\frac1{\pi}\int_0^\pi e^{z(\cos \tau-\cos\delta)} \cos(\mu \tau) d\tau-\frac{\sin(\mu\pi)}{\pi}\int_0^\infty e^{-z(\cosh \tau+\cos\delta)} e^{-\tau\mu} d\tau\\
& =\frac1{\pi}\int_0^\pi e^{z(\cos \tau-\cos\delta)} \chi(\tau)\cos(\mu \tau) d\tau+\frac1{\pi}\Big(\frac{\partial}{\partial \tau}\Big)\big( e^{z(\cos \tau-\cos\delta)}(1-\chi(\tau))\big) \frac{\cos(\mu \tau)}{\mu^2}\Big|_{\tau=0}^{\tau=\pi} \\
&\quad+ \frac{(-1)}{\pi}\int_0^\pi \Big(\frac{\partial}{\partial \tau}\Big)^2\big( e^{z(\cos \tau-\cos\delta)}(1-\chi(\tau))\big) \frac{ \cos(\mu \tau)}{\mu^2} d\tau
\\&+\frac{\sin(\mu\pi)}{\pi}\Big(\frac{\partial}{\partial \tau}\Big)\big(  e^{-z(\cosh \tau+\cos\delta)}\big) \frac{e^{-\tau\mu}}{\mu^2}\Big|_{\tau=0}^\infty
-\frac{\sin(\mu\pi)}{\pi}\int_0^\infty \Big(\frac{\partial}{\partial \tau}\Big)\big(  e^{-z(\cosh \tau+\cos\delta)}\big) \frac{e^{-\tau\mu}}{\mu^2} d\tau.
\end{split}
\end{equation*}
Again we observe that the boundary term
\begin{equation*}
\begin{split}
&\frac1{\pi}\Big(\frac{\partial}{\partial \tau}\Big)\big( e^{z(\cos \tau-\cos\delta)}(1-\chi(\tau))\big) \frac{ \cos(\mu \tau)}{\mu^2}\Big|_{\tau=0}^{\tau=\pi} 
+\frac{\sin(\mu\pi)}{\pi}\Big(\frac{\partial}{\partial \tau}\Big)\big(  e^{-z(\cosh \tau+\cos\delta)}\big) \frac{e^{-\tau\mu}}{\mu^2}\Big|_{\tau=0}^\infty\\
&=\frac1{\pi}\Big(\frac{\partial}{\partial \tau}\Big)\big( e^{z(\cos \tau-\cos\delta)}(1-\chi(\tau))\big) \frac{ \cos(\mu \tau)}{\mu^2}\Big|_{\tau=\pi} 
-\frac{\sin(\mu\pi)}{\pi}\Big(\frac{\partial}{\partial \tau}\Big)\big(  e^{-z(\cosh \tau+\cos\delta)}\big) \frac{e^{-\tau\mu}}{\mu^2}\Big|_{\tau=0}
\end{split}
\end{equation*}
vanishes due to the fact $\sin\pi=\sinh 0=0$. Therefore, we have proved \eqref{in-parts} with $m=1$. Now we assume  \eqref{in-parts} holds for $m=k$, that is,
\begin{equation*}
\begin{split}
 &\frac1{\pi}\int_0^\pi e^{z(\cos \tau-\cos\delta)} \cos(\mu \tau) d\tau-\frac{\sin(\mu\pi)}{\pi}\int_0^\infty e^{-z(\cosh \tau+\cos\delta)} e^{-\tau\mu} d\tau\\
&=\frac1{\pi}\int_0^\pi e^{z(\cos \tau-\cos\delta)} \chi(\tau)\cos(\mu \tau) d\tau\\
&\quad+ \frac{(-1)^{k}}{\pi}\int_0^\pi \Big(\frac{\partial}{\partial \tau}\Big)^{2k}\big( e^{z(\cos \tau-\cos\delta)}(1-\chi(\tau))\big) \frac{ \cos(\mu \tau)}{\mu^{2k}} d\tau
\\&\qquad-\frac{\sin(\mu\pi)}{\pi}\int_0^\infty \Big(\frac{\partial}{\partial \tau}\Big)^{2k}\big(  e^{-z(\cosh \tau+\cos\delta)}\big) \frac{e^{-\tau\mu}}{\mu^{2k}} d\tau,
\end{split}
\end{equation*}
we aim to prove  \eqref{in-parts} when $m=k+1$. To this end, it suffices to check the boundary terms vanish. Indeed,
\begin{equation*}
\begin{split}
&\frac{(-1)^k}{\pi}\Big(\frac{\partial}{\partial \tau}\Big)^{2k}\big( e^{z(\cos \tau-\cos\delta)}(1-\chi(\tau))\big) \frac{ \sin(\mu \tau)}{\mu^{2k+1}}\Big|_{\tau=0}^{\tau=\pi} 
\\ \qquad &+\frac{\sin(\mu\pi)}{\pi}\Big(\frac{\partial}{\partial \tau}\Big)^{2k}\big(  e^{-z(\cosh \tau+\cos\delta)}\big) \frac{e^{-\tau\mu}}{\mu^{2k+1}}\Big|_{\tau=0}^\infty\\
&=\frac{(-1)^k}{\pi}\Big(\frac{\partial}{\partial \tau}\Big)^{2k}\big( e^{z(\cos \tau-\cos\delta)}(1-\chi(\tau))\big) \frac{ \sin(\mu \tau)}{\mu^{2k+1}}\Big|_{\tau=\pi}\\
&\qquad-
\frac{\sin(\mu\pi)}{\pi}\Big(\frac{\partial}{\partial \tau}\Big)^{2k}\big(  e^{-z(\cosh \tau+\cos\delta)}\big) \frac{e^{-\tau\mu}}{\mu^{2k+1}}\Big|_{\tau=0}=0,
\end{split}
\end{equation*}
and
\begin{equation*}
\begin{split}
&\frac{(-1)^{k+1}}{\pi}\Big(\frac{\partial}{\partial \tau}\Big)^{2k+1}\big( e^{z(\cos \tau-\cos\delta)}(1-\chi(\tau))\big) \frac{ \cos(\mu \tau)}{\mu^{2k+2}}\Big|_{\tau=0}^{\tau=\pi} 
\\ \qquad &+\frac{\sin(\mu\pi)}{\pi}\Big(\frac{\partial}{\partial \tau}\Big)^{2k+1}\big(  e^{-z(\cosh \tau+\cos\delta)}\big) \frac{e^{-\tau\mu}}{\mu^{2k+2}}\Big|_{\tau=0}^\infty\\
&=\frac{(-1)^{k+1}}{\pi}\Big(\frac{\partial}{\partial \tau}\Big)^{2k+1}\big( e^{z(\cos \tau-\cos\delta)}(1-\chi(\tau))\big) \frac{ \cos(\mu \tau)}{\mu^{2k+2}}\Big|_{\tau=\pi}\\
&\qquad-
\frac{\sin(\mu\pi)}{\pi}\Big(\frac{\partial}{\partial \tau}\Big)^{2k+1}\big(  e^{-z(\cosh \tau+\cos\delta)}\big) \frac{e^{-\tau\mu}}{\mu^{2k+2}}\Big|_{\tau=0}=0,
\end{split}
\end{equation*}
where we use the facts derived from \cite[Pag. 420]{Na}
\begin{equation*}
\begin{split}
(-1)^{k+1}\Big(\frac{\partial}{\partial \tau}\Big)^{2k}\big( e^{-z(\cos\delta-\cos \tau)}(1-\chi(\tau))\big)\Big|_{\tau=\pi} =\Big(\frac{\partial}{\partial \tau}\Big)^{2k}\big( e^{-z(\cos\delta+\cosh \tau)}\big)\Big|_{\tau=0},
\end{split}
\end{equation*}
and
\begin{equation*}
\begin{split}
(-1)^{k+1}\Big(\frac{\partial}{\partial \tau}\Big)^{2k+1}\big( e^{-z(\cos\delta-\cos \tau)}(1-\chi(\tau))\big)\Big|_{\tau=\pi} =\Big(\frac{\partial}{\partial \tau}\Big)^{2k+1}\big( e^{-z(\cos\delta+\cosh \tau)}\big)\Big|_{\tau=0}=0.
\end{split}
\end{equation*}
\end{proof}

Let $P=\sqrt{-\Delta_h+V_0(y)+\frac{(n-2)^2}4}$, then the left hand side of \eqref{est:up>} can be regarded as the operator
\begin{equation}\label{op}
\begin{split}
e^{z\cos\delta}&z^{-\frac{n-2}2}\Big(
\frac1{\pi}\int_0^\pi e^{z(\cos \tau-\cos\delta)} \chi(\tau)\cos(\tau P) d\tau\\&
+\frac{(-1)^{m}}{\pi}\int_0^\pi \Big(\frac{\partial}{\partial \tau}\Big)^{2m}\big( e^{z(\cos \tau-\cos\delta)}(1-\chi(\tau))\big) \frac{ \cos(\tau P)}{P^{2m}} d\tau
\\&\qquad-\frac{\sin(\pi P)}{\pi}\int_0^\infty \Big(\frac{\partial}{\partial \tau}\Big)^{2m}\big(  e^{-z(\cosh \tau+\cos\delta)}\big) \frac{e^{-\tau P}}{P^{2m}} d\tau\Big).
\end{split}
\end{equation}
The $\cos(t P)f$ in the first term is the unique solutions of wave equation
\begin{equation}
\begin{cases}
(\partial_t^2+(-\Delta_h+V_0(y)+\frac{(n-2)^2}4)u=0,\\
u|_{t=0}=f,\quad \partial_t u|_{t=0}=0.
\end{cases}
\end{equation}
By the finite speed of propagation \cite[Theorem 3.3]{CS}, $\cos(\tau P)(y,y')$ vanishes if $\tau<d_h(y,y')$ where $d_h$
denotes the distance in $Y$, then \eqref{op}
equals 
\begin{equation}\label{op'}
\begin{split}
e^{z\cos\delta}&z^{-\frac{n-2}2}\Big(
\frac1{\pi}\int_{\delta}^\pi e^{z(\cos \tau-\cos\delta)} \chi(\tau)\cos( \tau P) d\tau\\&+
\frac{(-1)^{m}}{\pi}\int_{\delta}^\pi \Big(\frac{\partial}{\partial \tau}\Big)^{2m}\big( e^{z(\cos \tau-\cos\delta)}(1-\chi(\tau))\big) \frac{ \cos(\tau P)}{P^{2m}} d\tau
\\&\qquad-\frac{\sin(\pi P)}{\pi}\int_0^\infty \Big(\frac{\partial}{\partial \tau}\Big)^{2m}\big(  e^{-z(\cosh \tau+\cos\delta)}\big) \frac{e^{-\tau P}}{P^{2m}} d\tau\Big).
\end{split}
\end{equation}
Then  the first case of \eqref{est:up>} is a consequence of the following lemma.
\begin{lemma}\label{L1} For $0\leq \delta \leq \frac12\epsilon_0$ and $z\geq1$, there exists a constant $N$ such that
\begin{equation}\label{est:up>1}
\begin{split}
\Big| &z^{-\frac{n-2}2}\int_{\delta}^\pi e^{z(\cos \tau-\cos\delta)} \chi(\tau)\cos( \tau P)(y,y') d\tau\Big|\lesssim1
  \end{split}
 \end{equation}
 and
\begin{equation}\label{est:up>2}
\begin{split}
\Big| &z^{-\frac{n-2}2}\sum_{k\in\N}\varphi_{k}(y)\overline{ \varphi_{k}(y')}\mu_k^{-2m}
\Big(\int_{\delta}^\pi 
\Big(\frac{\partial}{\partial \tau}\Big)^{2m}\big( e^{z(\cos \tau-\cos\delta)}(1-\chi(\tau))\big) \cos(\tau \mu_k) d\tau\\&-\sin(\pi \mu_k) \int_0^\infty 
\Big(\frac{\partial}{\partial \tau}\Big)^{2m}\big(  e^{-z(\cosh \tau+\cos\delta)}\big) e^{-\tau \mu_k} d\tau\Big)\Big|
 \lesssim
z^N e^{z(\cos(\frac12\epsilon_0)-\cos\delta)}.
  \end{split}
 \end{equation}
\end{lemma}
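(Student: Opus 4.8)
My plan is to dispose of \eqref{est:up>2} first, since it is the routine one, and then devote the main effort to \eqref{est:up>1}, where the Hadamard parametrix and finite propagation speed enter.

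\textbf{The estimate \eqref{est:up>2}.} Because $\chi\equiv1$ on $[0,\tfrac12\epsilon_0]\supset[0,\delta]$, the factor $1-\chi$ and all of its derivatives are supported in $[\tfrac12\epsilon_0,\pi]$, a region on which $\cos\tau\le\cos\tfrac12\epsilon_0$, hence $e^{z(\cos\tau-\cos\delta)}\le e^{z(\cos\frac12\epsilon_0-\cos\delta)}$; by the Leibniz rule $\bigl|\partial_\tau^{2m}\bigl(e^{z(\cos\tau-\cos\delta)}(1-\chi(\tau))\bigr)\bigr|\lesssim z^{2m}\,e^{z(\cos\frac12\epsilon_0-\cos\delta)}$ there. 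A similar computation gives $\bigl|\partial_\tau^{2m}\bigl(e^{-z(\cosh\tau+\cos\delta)}\bigr)\bigr|\lesssim e^{-z\cos\delta}e^{-z\cosh\tau}(1+z\cosh\tau)^{2m}\lesssim e^{-z\cos\delta}e^{-z/2}$, so the $\sin(\pi\mu_k)$-integral is $\lesssim\mu_k^{-1}e^{-z\cos\delta}e^{-z/2}$, far below the claimed bound. Inserting these into the sum over $k$, estimating $|\varphi_k(y)\overline{\varphi_k(y')}|$ by \eqref{est:Hk} and using Weyl's law $\mu_k\sim(1+k)^{1/(n-1)}$, the series $\sum_k(1+k)^{\frac{n-2}{n-1}}\mu_k^{-2m}$ converges once $m$ is taken with $2m>2n-3$, and one gets the bound $z^{\,2m-\frac{n-2}{2}}e^{z(\cos\frac12\epsilon_0-\cos\delta)}$, i.e.\ \eqref{est:up>2} with $N=2m-\tfrac{n-2}{2}$.

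\textbf{The estimate \eqref{est:up>1}.} By the finite propagation speed of the wave operator \cite[Theorem 3.3]{CS}, $\cos(\tau P)(y,y')$ vanishes for $\tau<\delta=d_h(y,y')$; together with $\operatorname{supp}\chi\subset[0,\epsilon_0]$ and $\delta\le\tfrac12\epsilon_0$ this confines the $\tau$-integral to $\delta\le\tau\le\epsilon_0$, inside the injectivity radius of $Y$ (shrinking $\epsilon_0$ slightly if need be, which only affects constants). On that range I would substitute the Hadamard parametrix for $\cos(\tau P)$, $P^2=-\Delta_h+V_0+\tfrac{(n-2)^2}{4}$ being a Schr\"odinger operator with smooth potential on the closed manifold $Y^{\,n-1}$ (see, e.g., \cite{Sogge}):
\begin{equation*}
\cos(\tau P)(y,y')=\sum_{\nu=0}^{N_0}w_\nu(y,y')\,\mathcal{E}_\nu(\tau,\rho)+\mathcal{R}_{N_0}(\tau,y,y'),\qquad\rho=d_h(y,y'),
\end{equation*}
where the $w_\nu$ are smooth on $Y\times Y$ (the Hadamard coefficients, with $w_0$ essentially the reciprocal square root of the volume density), the $\mathcal{E}_\nu(\tau,\rho)$ are the explicit Euclidean wave distributions — supported in $\{\tau\ge\rho\}$ and increasingly regular as $\nu$ grows — and $\mathcal{R}_{N_0}(\tau,\cdot,\cdot)\in C^M(Y\times Y)$ uniformly in $\tau$, with $M\to\infty$ as $N_0\to\infty$. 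Since $\cos\tau-\cos\delta\le0$ on $[\delta,\pi]$, the weight $e^{z(\cos\tau-\cos\delta)}$ is bounded by $1$, so the contribution of $\mathcal{R}_{N_0}$ to \eqref{est:up>1} is $\lesssim z^{-\frac{n-2}{2}}\int_\delta^{\epsilon_0}e^{z(\cos\tau-\cos\delta)}\,d\tau\lesssim z^{-\frac{n-2}{2}}\lesssim1$ for $n\ge2$, $z\ge1$. For the finitely many parametrix terms I would integrate by parts in $\tau$, transferring the distributional order of $\mathcal{E}_\nu$ onto the smooth weight $e^{z(\cos\tau-\cos\delta)}\chi(\tau)$; since $\chi\equiv1$ near $\tau=\delta$ and $\chi$ with all its derivatives vanishes at $\tau=\epsilon_0$, the boundary terms are harmless. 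What remains are one-dimensional integrals of Laplace type: writing $\cos\tau-\cos\delta=-2\sin\tfrac{\tau+\delta}{2}\sin\tfrac{\tau-\delta}{2}$, which near $\tau=\delta$ is $\sim-\sin\delta\,(\tau-\delta)$ and for small $\delta$ is $\sim-\tfrac12(\tau^2-\delta^2)$, each derivative falling on the weight costs a factor $\lesssim z$, while the leftover integration against the $(\tau^2-\delta^2)_+^{\alpha}$-type factors of $\mathcal{E}_\nu$ returns a compensating power of $z$; a count of these powers shows the leading term $\nu=0$ is $\lesssim z^{\frac{n-2}{2}}$ and the others smaller, so after the prefactor $z^{-\frac{n-2}{2}}$ the whole of \eqref{est:up>1} is $\lesssim1$.

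\textbf{Main obstacle.} The delicate point is precisely that last Laplace-type estimate: one must check that the interplay between differentiating the Gaussian-type weight $e^{z(\cos\tau-\cos\delta)}$ and integrating against the (strongly singular, parity-dependent) Euclidean wave distributions $\mathcal{E}_\nu$ reproduces \emph{exactly} the power $z^{\frac{n-2}{2}}$, no worse, and that it does so \emph{uniformly as} $\delta\to0^+$ — the negative powers of $\delta$ carried by the $\mathcal{E}_\nu$ being absorbed by the $\sqrt{z\delta}$-type gain of the Laplace integral (already in the model case $n=3$ one sees the $\delta$-factors cancel). Treating the cross-section dimension $n-1$ even and odd separately, and keeping the half-density coefficient $w_0$ under control, is where the computation must be carried out with care.
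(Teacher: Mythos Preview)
Your treatment of \eqref{est:up>2} is correct and essentially identical to the paper's: support of $1-\chi$ in $[\tfrac12\epsilon_0,\pi]$, Leibniz/chain-rule bound $\lesssim z^{2m}e^{z(\cos\frac12\epsilon_0-\cos\delta)}$, eigenfunction bound \eqref{est:Hk}, and Weyl's law to sum in $k$.

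For \eqref{est:up>1} the paper takes a genuinely different route. Instead of the Hadamard-coefficient expansion $\sum_\nu w_\nu\,\mathcal E_\nu$, it invokes the Fourier-integral form of the parametrix,
\[
\cos(\tau P)(y,y')=(2\pi)^{n-1}\!\int_{\R^{n-1}} e^{i\delta\,{\bf 1}\cdot\xi}\,a(\tau,y,y';|\xi|)\cos(\tau|\xi|)\,d\xi+R_N,
\]
with $a\in S^0$, passes to polar coordinates, and integrates over the sphere to extract the factor $a_\pm(\rho\delta)e^{\pm i\rho\delta}$ with $|a_\pm(r)|\lesssim(1+r)^{-(n-2)/2}$. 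What remains is a double integral in $(\tau,\rho)$, which the paper controls by a case split on the size of $z\delta^2$: when $z\le 2\delta^{-2}$ finite propagation speed is not even used and $\tau$ is dyadically decomposed around the scale $z^{-1/2}$; when $z\ge 2\delta^{-2}$ the lower limit $\tau=\delta$ becomes essential (otherwise the weight blows up near $\tau=0$) and the decomposition is around $|\tau-\delta|\sim(z\delta)^{-1}$. In each dyadic shell one either does nothing (small $\rho$) or integrates by parts in $\tau$ against $\cos(\tau\rho)$ (large $\rho$). This two-scale organization is exactly what produces a bound uniform across the transition $\delta\sim z^{-1/2}$ --- the obstacle you correctly flagged.

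Your approach through the $\mathcal E_\nu$ is not wrong in principle, but the step you label ``Laplace-type'' is the entire content of the lemma and is not a routine Laplace estimate. Two concrete issues: (i) the weight $e^{z(\cos\tau-\cos\delta)}$ has two natural widths near $\tau=\delta$, namely $z^{-1/2}$ when $\delta\lesssim z^{-1/2}$ and $(z\delta)^{-1}$ when $\delta\gtrsim z^{-1/2}$, and the singularity of $\mathcal E_0$ at $\tau=\delta$ is itself $\delta$-dependent (e.g.\ $(\tau^2-\delta^2)_+^{-(n-2)/2}$ for even $n-1$, degenerating badly as $\delta\to0$), so matching these to get a $\delta$-uniform bound forces a dichotomy equivalent to the paper's; (ii) your claim that ``boundary terms are harmless'' at $\tau=\delta$ is not right as stated: the $\mathcal E_\nu$ are singular (and in odd $n-1$ supported) precisely at $\tau=\delta$, so one cannot integrate by parts on $[\delta,\epsilon_0]$ and evaluate there --- one must write $\mathcal E_\nu=\partial_\tau^k F_\nu$ as distributions on the line and shift derivatives without touching the endpoint. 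The paper's FIO-plus-dyadic approach sidesteps both issues by keeping the $\rho$-integral and never confronting the $\mathcal E_\nu$ directly.
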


\begin{proof}
We first prove \eqref{est:up>2}.
 Notice that $z\geq1$, by the chain rule, we  obtain
  \begin{equation}
\begin{split}
\Big|\Big(\frac{\partial}{\partial \tau}\Big)^{2m}\big( e^{-z(\cos\delta-\cos \tau)}(1-\chi(\tau))\big) \Big|\leq C e^{-z(\cos\delta-\cos \tau)}z^{2m},
\end{split}
\end{equation}
and
\begin{equation}
\begin{split}
\Big|\Big(\frac{\partial}{\partial \tau}\Big)^{2m}\big( e^{-z(\cos\delta+\cosh \tau)}\big) \Big|\leq Ce^{-z(\cos\delta+\frac12\cosh \tau)} z^{2m} .
\end{split}
\end{equation}
Therefore, by \eqref{est:Hk} and the support of $1-\chi$, we show that the LHS of \eqref{est:up>2} is bounded by
 \begin{equation*}
\begin{split}
&z^{2m-\frac{n-2}2}\sum_{k\in\N}(1+k)^{\frac{n-2}{n-1}}\mu_k^{-2m}
\Big(\int_{\frac{\epsilon_0}2}^\pi 
e^{-z(\cos\delta-\cos \tau)}  d\tau+ e^{-z\cos\delta} \int_0^\infty 
e^{-\frac z2\cosh \tau}  d\tau\Big)\\
&\lesssim z^{2m-\frac{n-2}2}e^{z(\cos(\frac12\epsilon_0)-\cos\delta)}  \sum_{k\in\N}(1+k)^{\frac{n-2}{n-1}}\mu_k^{-2m}.
  \end{split}
 \end{equation*}
 Note $\mu_k=\sqrt{\lambda_k}\sim (1+k)^{\frac 1{n-1}}$ again, we choose $m$ large enough to ensure that
  \begin{equation*}
\begin{split}
\sum_{k\in\N}(1+k)^{\frac{n-2}{n-1}}\mu_k^{-2m}\lesssim 1.
  \end{split}
 \end{equation*}
Therefore, we obtain \eqref{est:up>2}  by choosing $N=2n-3$.\vspace{0.2cm}

We next prove \eqref{est:up>1}. Due to the compact support of $\chi$, for $|\tau|\leq \epsilon_0$, it suffices to prove
\begin{equation}\label{est:up>1-H}
\begin{split}
\Big| &z^{-\frac{n-2}2}\int_{\delta}^{\epsilon_0} e^{z(\cos \tau-\cos\delta)} \chi(\tau)\cos( \tau P)(y,y') d\tau\Big|\lesssim1.
  \end{split}
 \end{equation}
 By using the Hadamard parametrix (e.g. \cite[Theorem 3.1.5] {Sogge}), for $\tau<\epsilon_0$, we have
  \begin{equation}\label{Hp}
 \cos(\tau P)(y,y')=K_N(\tau, y,y')+R_N(\tau; y,y'),\quad \forall N>n+3
 \end{equation}
 where $R_N(\tau, y,y')\in\mathcal{C}^{N-n-3} ([0,\epsilon_0]\times Y\times Y)$ and 
 \begin{equation}
K_N(\tau, y,y')=(2\pi)^{n-1}\int_{\R^{n-1}} e^{i d_h(y,y'){\bf 1}\cdot\xi} a(\tau, y,y'; |\xi|) \cos(\tau |\xi|) d\xi
 \end{equation}
 where ${\bf 1}=(1,0,\ldots,0)$ and $a\in S^0$ zero order symbol satisfies 
 \begin{equation}\label{symb-a}
 |\partial^\alpha_{\tau,y,y'}\partial_\rho^k a(\tau,y,y';\rho)|\leq C_{\alpha,k, V_0}(1+\rho)^{-k}.
 \end{equation}
 We remark here that even though $P=\sqrt{-\Delta_h+V_0(y)+\frac{(n-2)^2}4}$ is disturbed by a smoothing potential which is not exact same as 
 the Laplacian on a compact manifold in \cite[Theorem 3.1.5] {Sogge}, the perturbation is harmless for the parametrix \eqref{Hp}.

Actually, in the construction of Hadamard parametrix, the original transport equation for non-perturbed operator is \cite[(2.4.15)-(2.4.16)] {Sogge}, which are
$$\rho \alpha_0=2\langle x, \nabla_x \alpha_0\rangle, \alpha_0(0)=1
$$
as well as $\alpha_\nu (x)$, $\nu=1,2,3 ..$ so that
$$2\nu\alpha_{\nu}-\rho\alpha_\nu+2\langle x, \nabla_x \alpha_\nu\rangle-2\Delta_g \alpha_{\nu-1}=0
$$
 
 In our case, the new transport equation for the perturbed operator would be
 $$\rho \alpha_0=2\langle x, \nabla_x \alpha_0\rangle, \alpha_0(0)=1
$$
as well as $\alpha_\nu (x)$, $\nu=1,2,3 ..$ so that
$$2\nu\alpha_{\nu}-\rho\alpha_\nu+2\langle x, \nabla_x \alpha_\nu\rangle-2\Delta_g \alpha_{\nu-1}-2 V_0\cdot \alpha_{\nu-1}=0
$$
 Thus two solutions to the transport equations only differ in $\alpha_\nu (x)$, $\nu=1,2,3 ..$, which is the reason why \eqref{symb-a} is true.
 We also refer the reader to H\"ormander \cite[\S17.4]{Hor} for the parametrix of a general second order differential operator with low order perturbations.

We choose $N$ large enough such that $$|R_N(\tau,y,y)|\leq \tau^{2N+2-n}\leq 1,\quad \tau\leq \epsilon_0,$$ then it is easy to see 
  \begin{equation}
\begin{split}
\big|z^{-\frac{n-2}2}
\int_{\delta}^\pi e^{z(\cos \tau-\cos\delta)}\chi(\tau) R_N(\tau; y,y') d\tau
\big|\leq C z^{-\frac{n-2}2}\lesssim 1.  \end{split}
 \end{equation}
 Therefore it suffices to prove
 \begin{equation}\label{est:low}
\begin{split}
\Big|z^{-\frac{n-2}2}
\int_{\delta}^\pi e^{z(\cos \tau-\cos\delta)}\chi(\tau) \int_{\R^{n-1}} e^{i d_h(y,y'){\bf 1}\cdot\xi} a(\tau, y,y'; |\xi|) \cos(\tau |\xi|) d\xi d\tau \Big|\leq C.
 \end{split}
 \end{equation}
 From \cite[Theorem 1.2.1]{sogge}, we also note that
\begin{equation}
\begin{split}
 \int_{\mathbb{S}^{n-2}} e^{i d_h(y,y') \rho{\bf 1}\cdot\omega} d\omega d\tau=\sum_{\pm}  a_\pm(\rho d_h(y,y')) e^{\pm i \rho d_h(y,y')}
\end{split}
\end{equation}
where
\begin{equation}
\begin{split}
| \partial_r^k a_\pm(r)|\leq C_k(1+r)^{-\frac{n-2}2-k},\quad k\geq 0.
\end{split}
\end{equation}
Then we are reduce to estimate the integral
 \begin{equation*}
\begin{split}
&z^{-\frac{n-2}2}
\int_\delta^\pi e^{z(\cos \tau-\cos\delta)}\chi(\tau) \int_{\R^{n-1}} e^{i d_h(y,y'){\bf 1}\cdot\xi} a(\tau, y,y'; |\xi|) \cos(\tau |\xi|) d\xi d\tau \\
&=
z^{-\frac{n-2}2}
\int_\delta^\pi e^{z(\cos \tau-\cos\delta)}\chi(\tau) \int_{0}^\infty a_\pm(\rho d_h(y,y')) e^{\pm i \rho d_h(y,y')} a(\tau, y,y'; \rho) \cos(\tau \rho) \rho^{n-2}d\rho d\tau.
 \end{split}
 \end{equation*}
\bigskip

To prove \eqref{est:up>1-H}, we shall divide the discussion into two cases.\vspace{0.2cm}
 
 \textbf{Case 1}. $z\le 2\delta^{-2}=2d_h^{-2}(y,y')$.
 In this case, we do not need to make use of the finite propagation speed property, by the above argument using Hadamard parametrix,  our goal is to show 
 
 $$z^{-\frac{n-2}2}
\int_0^\pi e^{z(\cos \tau-\cos\delta)}\chi(\tau) \int_{0}^\infty a_\pm(\rho d_h(y,y')) e^{\pm i \rho d_h(y,y')} a(\tau, y,y'; \rho) \cos(\tau \rho) \rho^{n-2}d\rho d\tau \lesssim 1
 $$
Since the above integral is even in $\tau$, we can further reduced to showing that
\begin{equation}\label {low}
 z^{-\frac{n-2}2}
\int_{\R} e^{z(\cos \tau-\cos\delta)}\chi(\tau) \int_{0}^\infty b(\rho d_h(y,y') a(\tau, y,y'; \rho) \cos(\tau \rho) \rho^{n-2}d\rho d\tau \lesssim 1,
 \end{equation}
 where we set $b(\rho d_h(y,y')=a_\pm(\rho d_h(y,y')) e^{\pm i \rho d_h(y,y')}$.
 
 Let us fix a Littlewood-Paley bump function
$\beta\in C^\infty_0((1/2,2))$ satisfying
$$
\sum_{\ell=-\infty}^\infty \beta(2^{-\ell} s)=1, \quad s>0,$$
and we set
$$\beta_{0}(s)=\sum_{\ell\le 0} \beta(2^{-\ell}|s|)
\in C^\infty_0((-2,2)).$$

 \textbf{Subcase 1.1} the sub-case that $|\tau|\le 4z^{-1/2}$. In this case, we want to show that 
 \begin{equation}\label {low1}
 \begin{split}
 z^{-\frac{n-2}2}
\int_{\R} e^{z(\cos \tau-\cos\delta)}\chi(\tau)&\big(\beta_{0}(z^{1/2}\tau)+\beta(2^{-1}z^{1/2}\tau)\big) \\
&\times \int_{0}^\infty b(\rho d_h(y,y') a(\tau, y,y'; \rho) \cos(\tau \rho) \rho^{n-2}d\rho d\tau \lesssim 1.
\end{split}
 \end{equation}
 
 If we also have $\rho\le 4z^{1/2}$, then we do not do any integration by parts, note that by using the condition $z\le 2\delta^{-2}$, the power on the exponential 
 $z(\cos \tau-\cos\delta) \lesssim 1$ as long as $|\tau|\le 4z^{-1/2}$. Thus the integral in \eqref{low1} is always bounded by 
 $$z^{-\frac{n-2}2} z^{-1/2} z^{\frac{n-1}{2}}\lesssim 1
 $$
 
 If on the other hand, we have $\rho\ge 4z^{1/2}$, we do integration by parts in $d\tau$, then each time we gain a factor of $\rho^{-1}$ from the function $\cos(\tau \rho)$, and we at most loss a factor of $z\sin \tau$ or $z^{1/2}$, which is always less than $z^{1/2}$ up to a constant, so after integration by parts $N$ times for $N\ge n$, 
  the integral in \eqref{low1} is bounded by 
 $$z^{-\frac{n-2}2} z^{-1/2} z^{N/2}  \int_{z^{1/2}}^\infty \rho^{n-2-N}d\rho\lesssim 1
 $$

 \textbf{Subcase 1.2} $\tau\approx 2^{j}z^{-1/2}$, $j\ge 2$ and $2^j\lesssim \epsilon_0 z^{1/2}$.
In this case, we want to show that 
 \begin{equation}\label {low2}
 \begin{split}
 z^{-\frac{n-2}2}
\int_{- \infty}^{+\infty} &e^{z(\cos \tau-\cos\delta)}\chi(\tau)\beta(z^{1/2}2^{-j}|\tau|) 
\\\times &\int_{0}^\infty b(\rho d_h(y,y') a(\tau, y,y'; \rho) \cos(\tau \rho) \rho^{n-2}d\rho d\tau \lesssim e^{-j}.
\end{split}
 \end{equation}
which would give us desired bounds after summing over $j$.
 Since $\tau\approx 2^{j}z^{-1/2}$ and $z\le 2d_h^{-2}(y,y')$ imply that $|\tau|\ge 2 d_h(y,y')$, it is straightforward to check that $\cos \tau-\cos\delta\approx -\tau^2$ in this case, which implies 
 $$ e^{z(\cos \tau-\cos\delta)}\lesssim e^{-2^j}.
 $$
 
 Now we can repeat the previous argument, if in this case we have $\rho\le 2^{-j}z^{1/2}$, then we do not do any integration by parts, the integral in \eqref{low2} is always bounded by 
 \begin{equation}\nonumber z^{-\frac{n-2}2} z^{-1/2}2^j 2^{-(n-1)j}z^{\frac{n-1}{2}} e^{-2^j}\lesssim e^{-j}
 \end{equation}
 
 If on the other hand, we have $\rho\ge 2^{-j}z^{1/2}$, we do integration by parts in $d\tau$, then each time we gain a factor of $\rho^{-1}$ from the function $\cos(\tau \rho)$, and we at most loss a factor of $z\sin \tau\lesssim 2^{j}z^{1/2}$, so after integration by parts $N$ times for $N\ge n$, 
 the integral in \eqref{low2} is bounded by 
 \begin{equation}\nonumber
e^{-2^j} z^{-\frac{n-2}2} z^{-1/2}2^{j} z^{Nz/2}2^{Nj}  \int_{2^{-j}z^{1/2}}^\infty \rho^{n-2-N}d\rho\lesssim e^{-j}
  \end{equation}

 \bigskip
 
\textbf{Case 2}. $z\ge 2\delta^{-2}=2d_h^{-2}(y,y')$.
 In this case, we need to make use of the finite propagation speed property, in order to avoid the blow up of the function $e^{z(\cos \tau-\cos\delta)}$ when $\tau$ is close to 0. We choose a smooth cut off function $\eta(\tau)=\beta_0 (z\delta |\tau-\delta|)+\sum_{\ell\ge 1} \beta(z\delta2^{-\ell}(\tau-\delta))$. It is easy to see that $\text{supp}\, \eta\subset (\delta-2(z\delta)^{-1}, +\infty)$
 which is a subset of $(0, \infty)$ since $(z\delta)^{-1}< \delta/2$ in our case. Also note that $\eta(\tau)\equiv 1,\,\,\,\forall \tau\ge \delta$, by using the finite propagation speed property of $\cos (\tau P)$ and Hadamard parametrix as above, we are reduced to show that

 $$z^{-\frac{n-2}2}
\int_0^\pi e^{z(\cos \tau-\cos\delta)}\chi(\tau)\eta(\tau) \int_{0}^\infty b(\rho d_h(y,y')) a(\tau, y,y'; \rho) \cos(\tau \rho) \rho^{n-2}d\rho d\tau  \lesssim 1
 $$

 \textbf{Subcase 2.1} $|\tau-\delta|\le (z\delta)^{-1}<\frac\delta 2$.
 In this case, we want to show that 
 \begin{equation}\label {high1}
 \begin{split}
 z^{-\frac{n-2}2}
\int_{0}^{\pi} e^{z(\cos \tau-\cos\delta)}&\chi(\tau)\beta_0 (z\delta |\tau-\delta|)\\
&\times\int_{0}^\infty b(\rho d_h(y,y')) a(\tau, y,y'; \rho) \cos(\tau \rho) \rho^{n-2}d\rho d\tau \lesssim 1.
\end{split}
 \end{equation}
 If we also have $\rho\le z\delta$, then we do not do any integration by parts, note that since in this case $\tau\approx \delta$, we have $\cos \tau-\cos \delta=2\sin(\frac{\delta+\tau}{2})\sin(\frac{\delta-\tau}{2})\approx \delta(\delta-\tau)$, the power on the exponential 
 $z(\cos \tau-\cos\delta) \lesssim 1$ as long as $|\tau-\delta|\le (z\delta)^{-1}$. Thus the integral in \eqref{high1} is always bounded by 
 $$z^{-\frac{n-2}2} (z\delta)^{-1} (z\delta)^{\frac{n-2}{2}+1}\delta^{-\frac{n-2}{2}}\lesssim 1,
 $$
 where we used the fact that $b(\rho d_h(y,y'))\leq C(\rho \delta)^{-\frac{n-2}2}$.
 
 If on the other hand, we have $\rho\ge z\delta$, we do integration by parts in $d\tau$, then each time we gain a factor of $\rho^{-1}$ from the function $\cos(\tau \rho)$, and we at most loss a factor of $z\sin \tau\lesssim z\delta$, , so after integration by parts $N$ times for $N\ge n$, 
the integral in \eqref{high1} is bounded by 
 $$z^{-\frac{n-2}2} (z\delta)^{-1} (z\delta)^{N/2}  \int_{z\delta}^\infty \rho^{\frac{n-2}{2}-N}\delta^{-\frac{n-2}{2}}d\rho\lesssim 1
 $$
\bigskip
 
 \textbf{Subcase 2.2} $\tau-\delta\approx 2^j(z\delta)^{-1}$, $j\ge 1$ and $2^j\lesssim \epsilon_0z\delta $.
 In this case, we want to show that 
 \begin{equation}\label {high2}
 \begin{split}
 z^{-\frac{n-2}2}
\int_{- \infty}^{+\infty} &e^{z(\cos \tau-\cos\delta)}\chi(\tau)\beta\big(z\delta2^{-j}(\tau-\delta)\big)\\
&\times\int_{0}^\infty b(\rho d_h(y,y')) a(\tau, y,y'; \rho) \cos(\tau \rho) \rho^{n-2}d\rho d\tau \lesssim e^{-j}.
\end{split}
 \end{equation}
which would give us desired bounds after summing over $j$.
 In this case, it is straightforward to check that $z(\cos \tau-\cos \delta)=2z\sin(\frac{\delta+\tau}{2})\sin(\frac{\delta-\tau}{2})\lesssim -2^j$ in this case, which implies 
 $$ e^{z(\cos \tau-\cos\delta)}\lesssim e^{-2^j}.
 $$
 
 Now we can repeat the previous argument. To begin with, we shall further assume that 
 \vspace{0.1cm}
 
 {\bf (i)} $\tau\le 2\delta$, in other words $2^{j}(z\delta)^{-1} \lesssim \delta$.
 
In this case, if we have $\rho\le 2^{-j} z\delta$, then we do not do any integration by parts, the integral in \eqref{high2} is always bounded by 
 \begin{equation}\nonumber
 z^{-\frac{n-2}2}2^j (z\delta)^{-1} (2^{-j}z\delta)^{\frac{n-2}{2}+1}\delta^{-\frac{n-2}{2}}e^{-2^j}\lesssim e^{-j}
 \end{equation}
 
On the other hand, if we have $\rho\ge 2^{-j} z\delta$, we do integration by parts in $d\tau$, then each time we gain a factor of $\rho^{-1}$ from the function $\cos(\tau \rho)$, and we at most loss a factor of $z\sin \tau\lesssim z\delta$, so after integration by parts $N$ times for $N\ge n$, 
 the integral in \eqref{high2} is bounded by 
 $$e^{-2^j}z^{-\frac{n-2}2} 2^j(z\delta)^{-1} (z\delta)^{N/2}  \int_{2^{-j}z\delta}^\infty \rho^{\frac{n-2}{2}-N}\delta^{-\frac{n-2}{2}}d\rho \lesssim e^{-j}
 $$
 \vspace{0.1cm}

{\bf  (ii)} $\tau\ge 2\delta$, in other words $2^{j}(z\delta)^{-1} \gtrsim \delta$.
 
In this case, if we have $\rho\le  \delta^{-1}2^j \approx z\tau$, then we do not do any integration by parts, the integral in \eqref{high2} is always bounded by 
 \begin{equation}\nonumber
 z^{-\frac{n-2}2}2^j (z\delta)^{-1} (\delta^{-1}2^j)^{\frac{n-2}{2}+1}\delta^{-\frac{n-2}{2}}e^{-2^j}\lesssim e^{-j},
 \end{equation}
 where we used the fact that $z^{-n/2}\delta^{-n}\lesssim 1$.
 
On the other hand, if we have $\rho\ge \delta^{-1}2^j$, we do integration by parts in $d\tau$, then each time we gain a factor of $\rho^{-1}$ from the function $\cos(\tau \rho)$, and we at most loss a factor of $z\sin \tau\lesssim \delta^{-1}2^j$ or $z\delta 2^{-j}\lesssim \delta^{-1}$, so after integration by parts $N$ times for $N\ge n$, 
 the integral in \eqref{high2} is bounded by 
 $$e^{-2^j}z^{-\frac{n-2}2} 2^j(z\delta)^{-1} (\delta^{-1}2^j)^{N/2}  \int_{2^{j}\delta^{-1}}^\infty \rho^{\frac{n-2}{2}-N}\delta^{-\frac{n-2}{2}}d\rho \lesssim e^{-j}.
 $$
\end{proof}

{\bf Step 2:} we consider the case $\frac12\epsilon_0\leq \delta\leq \pi$. In this case, it is more easier than the above step. 
Indeed, we do not need the cut function $\chi$ and we modify the above argument. By integration by parts and arguing as Lemma \ref{lem:in-parts}, for fixed $0\leq\delta\leq\pi$ and $m\geq0$, we compute that
\begin{equation}
\begin{split}
e^{-z\cos\delta}I_{\mu}(z)
&= \frac1{\pi}\int_0^\pi e^{z(\cos \tau-\cos\delta)} \cos(\mu \tau) d\tau-\frac{\sin(\mu\pi)}{\pi}\int_0^\infty e^{-z(\cosh \tau+\cos\delta)} e^{-\tau\mu} d\tau\\
& = \frac{(-1)^{m}}{\pi}\int_0^\pi \Big(\frac{\partial}{\partial \tau}\Big)^{2m-1}\big( e^{z(\cos \tau-\cos\delta)}\big) \frac{ \sin(\mu \tau)}{\mu^{2m-1}} d\tau
\\&\qquad-\frac{\sin(\mu\pi)}{\pi}\int_0^\infty \Big(\frac{\partial}{\partial \tau}\Big)^{2m-1}\big(  e^{-z(\cosh \tau+\cos\delta)}\big) \frac{e^{-\tau\mu}}{\mu^{2m-1}} d\tau\\
&= \frac{(-1)^{m}}{\pi}\int_0^\pi \Big(\frac{\partial}{\partial \tau}\Big)^{2m}\big( e^{z(\cos \tau-\cos\delta)}\big) \frac{ \cos(\mu \tau)}{\mu^{2m}} d\tau
\\&\qquad-\frac{\sin(\mu\pi)}{\pi}\int_0^\infty \Big(\frac{\partial}{\partial \tau}\Big)^{2m}\big(  e^{-z(\cosh \tau+\cos\delta)}\big) \frac{e^{-\tau\mu}}{\mu^{2m}} d\tau.
\end{split}
\end{equation}

By the finite speed of propagation and similar argument as above, it suffices to prove
\begin{lemma} For $\frac12\epsilon_0\leq \delta\leq \pi$ and $z\geq1$, there exists a constant $N$ such that
\begin{equation}\label{est:up>'}
\begin{split}
\Big| &z^{-\frac{n-2}2}\sum_{k\in\N}\varphi_{k}(y)\overline{ \varphi_{k}(y')}\mu_k^{-2m}
\Big(\int_{\delta}^\pi 
\Big(\frac{\partial}{\partial \tau}\Big)^{2m}\big( e^{z(\cos \tau-\cos\delta)}\big) \cos(\tau \mu_k) d\tau\\&-\sin(\pi \mu_k) \int_0^\infty 
\Big(\frac{\partial}{\partial \tau}\Big)^{2m}\big(  e^{-z(\cosh \tau+\cos\delta)}\big) e^{-\tau \mu_k} d\tau\Big)\Big|
 \lesssim z^N.
  \end{split}
 \end{equation}
\end{lemma}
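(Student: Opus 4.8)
The plan is to reprove the estimate \eqref{est:up>'} by following the scheme used for \eqref{est:up>2} in Lemma \ref{L1}, observing that in the present range $\frac12\epsilon_0\le\delta\le\pi$ the situation is genuinely easier: the bound to be proved is only a power $z^N$, with no exponential factor to track, so neither the cutoff $\chi$ nor the Hadamard parametrix is needed — only crude derivative estimates together with the eigenfunction bound \eqref{est:Hk} and the Weyl law. The decisive point is that, by the finite-propagation-speed reduction already carried out before the lemma, the first $\tau$-integral runs only over $[\delta,\pi]$, where $\cos$ is decreasing; hence $\cos\tau-\cos\delta\le0$ and $e^{z(\cos\tau-\cos\delta)}\le1$ on the entire interval of integration.

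First I would estimate the $2m$-th $\tau$-derivatives. Since every $\tau$-derivative of $z(\cos\tau-\cos\delta)$ is bounded in modulus by $z$, repeated application of the chain rule gives, for $z\ge1$,
$$\Big|\Big(\tfrac{\partial}{\partial\tau}\Big)^{2m}\big(e^{z(\cos\tau-\cos\delta)}\big)\Big|\le C_m z^{2m}e^{z(\cos\tau-\cos\delta)}\le C_m z^{2m},\qquad\tau\in[\delta,\pi],$$
so that $\int_\delta^\pi\big|\partial_\tau^{2m}(e^{z(\cos\tau-\cos\delta)})\big|\,d\tau\le C_m z^{2m}$. For the integral over $[0,\infty)$ the same reasoning yields $\big|\partial_\tau^{2m}(e^{-z(\cosh\tau+\cos\delta)})\big|\le C_m z^{2m}(\cosh\tau)^{2m}e^{-z(\cosh\tau+\cos\delta)}$; here I would use $\cosh\tau\ge1$ to factor $e^{-z\cosh\tau}=e^{-z}e^{-z(\cosh\tau-1)}\le e^{-z}e^{-(\cosh\tau-1)}$ (legitimate since $z\ge1$), followed by $1+\cos\delta\ge0$, to obtain
$$\int_0^\infty\big|\partial_\tau^{2m}(e^{-z(\cosh\tau+\cos\delta)})\big|\,e^{-\tau\mu_k}\,d\tau\le C_m z^{2m}e^{-z(1+\cos\delta)}\int_0^\infty(\cosh\tau)^{2m}e^{-(\cosh\tau-1)}\,d\tau\le C_m z^{2m},$$
the last integral being an absolute constant. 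I expect this final absorption — of the possibly large factor $e^{-z\cos\delta}$, which appears when $\delta$ is close to $\pi$, against the $e^{-z}$ produced by $e^{-z\cosh\tau}$ — to be the only step requiring a moment's thought; the rest is bookkeeping.

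It then remains to sum over $k$. Using $|\sin(\pi\mu_k)|\le1$, bringing the absolute value inside the sum, and invoking \eqref{est:Hk} in the form $|\varphi_k(y)\overline{\varphi_k(y')}|\le\|\varphi_k\|_{L^\infty(Y)}^2\le C(1+k)^{\frac{n-2}{n-1}}$, the left-hand side of \eqref{est:up>'} is dominated by
$$C_m\,z^{-\frac{n-2}2}\,z^{2m}\sum_{k\in\N}(1+k)^{\frac{n-2}{n-1}}\mu_k^{-2m}.$$
Since $\mu_k=\sqrt{\lambda_k}\sim(1+k)^{\frac1{n-1}}$ by Weyl's asymptotics (and $\mu_k\ge\mu_0>0$), the series converges once $m$ is chosen large enough — for instance $m=n$, which makes $\frac{n-2}{n-1}-\frac{2m}{n-1}<-1$ — and then for $z\ge1$ the whole expression is $\le Cz^{2m-\frac{n-2}2}\le Cz^{N}$ with, say, $N=2n$, a constant depending only on $n$. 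This proves \eqref{est:up>'} and hence, together with the reductions preceding the lemma, completes the case $\frac12\epsilon_0\le\delta\le\pi$ of \eqref{est:up>}.
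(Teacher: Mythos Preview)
Your proof is correct and follows essentially the same approach as the paper's: crude chain-rule bounds on the $2m$-th derivatives, the eigenfunction estimate \eqref{est:Hk}, and Weyl's law to make the $k$-sum converge for $m$ large. If anything, your treatment of the second integral is cleaner than the paper's: you explicitly pair the potentially large factor $e^{-z\cos\delta}$ with the full $e^{-z\cosh\tau}$ via $e^{-z(1+\cos\delta)}\le1$, whereas the paper keeps only $e^{-\frac z2\cosh\tau}$ at that stage, which taken literally would leave an unbounded $e^{z/2}$.
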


\begin{proof}
Notice that $z\geq1$, we further obtain
  \begin{equation}
\begin{split}
\Big|\Big(\frac{\partial}{\partial \tau}\Big)^{2m}\big( e^{-z(\cos\delta-\cos \tau)}\big) \Big|\leq C e^{-z(\cos\delta-\cos \tau)}z^{2m},
\end{split}
\end{equation}
and
\begin{equation}
\begin{split}
\Big|\Big(\frac{\partial}{\partial \tau}\Big)^{2m}\big( e^{-z(\cos\delta+\cosh \tau)}\big) \Big|\leq Ce^{-z(\cos\delta+\frac12\cosh \tau)} z^{2m} .
\end{split}
\end{equation}
Therefore, by \eqref{est:Hk}, we show that the LHS of \eqref{est:up>'} is bounded by
 \begin{equation}\label{est:up>2-2} 
\begin{split}
&z^{2m-\frac{n-2}2}\sum_{k\in\N}(1+k)^{\frac{n-2}{n-1}}\mu_k^{-2m}
\Big(\int_{\delta}^\pi 
e^{-z(\cos\delta-\cos \tau)}  d\tau+ e^{-z\cos\delta} \int_0^\infty 
e^{-\frac z2\cosh \tau}  d\tau\Big).
  \end{split}
 \end{equation}
 Note $\mu_k=\sqrt{\lambda_k}\sim (1+k)^{\frac 1{n-1}}$ again, we choose $m$ large enough to ensure that
  \begin{equation*}
\begin{split}
\sum_{k\in\N}(1+k)^{\frac{n-2}{n-1}}\mu_k^{-2m}\lesssim 1.
  \end{split}
 \end{equation*}
Therefore, we obtain \eqref{est:up>'}  when $\frac12\epsilon_0\leq \delta \leq \pi$ by choosing $N=2n-3$.\vspace{0.2cm}

\end{proof}
 
{\bf Step 3:} We consider the last case that $\delta> \pi$.  
Arguing as above, to prove \eqref{est:up}, it suffices to prove
\begin{lemma} For $\pi\leq\delta$ and $z\geq1$, we have the estimate
\begin{equation}\label{est:up3}
\begin{split}
\Big| z^{-\frac{n-2}2}&\sum_{k\in\N}\varphi_{k}(y)\overline{ \varphi_{k}(y')}\mu_k^{-2m}
\Big(\int_{0}^\pi 
\Big(\frac{\partial}{\partial \tau}\Big)^{2m}\big( e^{z(\cos \tau-1)}\big) \cos(\tau \mu_k) d\tau\\&-\sin(\pi \mu_k) \int_0^\infty 
\Big(\frac{\partial}{\partial \tau}\Big)^{2m}\big(  e^{-z(\cosh \tau+1)}\big) e^{-\tau \mu_k} d\tau\Big)\Big| \lesssim e^{-z}.
  \end{split}
 \end{equation}
\end{lemma}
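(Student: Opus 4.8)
The plan is to prove \eqref{est:up3} by treating separately its two pieces
\[
\mathrm{I}:=z^{-\frac{n-2}2}\sum_{k\in\N}\varphi_k(y)\overline{\varphi_k(y')}\,\mu_k^{-2m}\int_0^\pi\Big(\tfrac{\partial}{\partial\tau}\Big)^{2m}\!\big(e^{z(\cos\tau-1)}\big)\cos(\tau\mu_k)\,d\tau
\]
and
\[
\mathrm{II}:=z^{-\frac{n-2}2}\sum_{k\in\N}\varphi_k(y)\overline{\varphi_k(y')}\,\mu_k^{-2m}\sin(\pi\mu_k)\int_0^\infty\Big(\tfrac{\partial}{\partial\tau}\Big)^{2m}\!\big(e^{-z(\cosh\tau+1)}\big)e^{-\tau\mu_k}\,d\tau,
\]
and showing $|\mathrm{I}|+|\mathrm{II}|\lesssim e^{-z}$. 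Combined with the identity of {\bf Step 2} with the constant $1$ in place of $\cos\delta$ — which expresses $e^{-z}I_{\mu_k}(z)$ as $\tfrac{(-1)^m}{\pi}\mu_k^{-2m}$ times the bracket in \eqref{est:up3} — this gives, exactly as in {\bf Step 2}, $\big|\text{LHS of }\eqref{est:up}\big|\lesssim e^{-\frac{r^2+s^2}4}\le e^{-\frac{(r+s)^2}8}=e^{-d^2/8}$, which is \eqref{est:up} since $d=r+s$ in this range.

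The term $\mathrm{II}$ is routine: since $\cosh\tau+1\ge2$, the chain rule and $z\ge1$ give $\big|(\tfrac{\partial}{\partial\tau})^{2m}e^{-z(\cosh\tau+1)}\big|\le Cz^{2m}e^{-z(1+\frac12\cosh\tau)}$, and $\int_0^\infty e^{-\frac z2\cosh\tau}\,d\tau\le e^{-z/2}\int_0^\infty e^{-\frac z2(\cosh\tau-1)}\,d\tau\lesssim e^{-z/2}$; together with $|\sin(\pi\mu_k)|\le1$, $e^{-\tau\mu_k}\le1$, the eigenfunction estimate \eqref{est:Hk}, and $\mu_k\sim(1+k)^{1/(n-1)}$ — choosing $m$ large enough that $\sum_k(1+k)^{\frac{n-2}{n-1}}\mu_k^{-2m}<\infty$ — one obtains $|\mathrm{II}|\lesssim z^{2m-\frac{n-2}2}e^{-3z/2}\lesssim e^{-z}$ for $z\ge1$.

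The term $\mathrm{I}$ is the heart of the matter, and here one must use finite propagation speed: differentiating $e^{z(\cos\tau-1)}$ directly gives only $\big|(\tfrac{\partial}{\partial\tau})^{2m}e^{z(\cos\tau-1)}\big|\le Cz^{2m}e^{z(\cos\tau-1)}$ and hence only $|\mathrm{I}|\lesssim z^N$, with no exponential gain, which is far too weak (already for $r=s\to\infty$, where $d^2=(2r)^2\to\infty$ but $z=r^2/2$). Choosing $m$ so large that $\sum_k\|\varphi_k\|_{L^\infty(Y)}^2\mu_k^{-2m}<\infty$, the series $\sum_k\varphi_k(y)\overline{\varphi_k(y')}\mu_k^{-2m}\cos(\tau\mu_k)$ converges absolutely and uniformly to the kernel $[P^{-2m}\cos(\tau P)](y,y')$, so $z^{\frac{n-2}2}\mathrm{I}=\int_0^\pi(\tfrac{\partial}{\partial\tau})^{2m}(e^{z(\cos\tau-1)})\,[P^{-2m}\cos(\tau P)](y,y')\,d\tau$. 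Applying spectrally the identity
\[
\frac{\cos(\tau\lambda)}{\lambda^{2m}}=\sum_{j=0}^{m-1}\frac{(-1)^j\tau^{2j}}{(2j)!}\,\lambda^{-2(m-j)}+\frac{(-1)^m}{(2m-1)!}\int_0^\tau(\tau-\sigma)^{2m-1}\cos(\sigma\lambda)\,d\sigma
\]
expresses $[P^{-2m}\cos(\tau P)](y,y')$ as $\sum_{j=0}^{m-1}\frac{(-1)^j\tau^{2j}}{(2j)!}[P^{-2(m-j)}](y,y')$ plus $\frac{(-1)^m}{(2m-1)!}\int_0^\tau(\tau-\sigma)^{2m-1}[\cos(\sigma P)](y,y')\,d\sigma$, and for $\delta=d_h(y,y')\ge\pi$ and $\tau\in[0,\pi]$ the latter integral vanishes by the finite speed of propagation of $\cos(\sigma P)$ (\cite[Theorem 3.3]{CS}). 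Thus on $[0,\pi]$ this kernel equals the polynomial $Q(\tau)=\sum_{j=0}^{m-1}\frac{(-1)^j\tau^{2j}}{(2j)!}[P^{-2(m-j)}](y,y')$, of degree $\le2m-2$, even in $\tau$, with coefficients uniformly bounded on $\{d_h(y,y')\ge\pi\}$ because each $P^{-2(m-j)}$ has negative order and hence a smooth kernel off the diagonal. Integrating $\int_0^\pi(\tfrac{\partial}{\partial\tau})^{2m}(e^{z(\cos\tau-1)})Q(\tau)\,d\tau$ by parts $2m$ times kills the interior term ($Q^{(2m)}\equiv0$) and leaves only boundary terms at $\tau=0$ and $\tau=\pi$: those at $\tau=0$ vanish because $e^{z(\cos\tau-1)}$ and $Q$ are even in $\tau$, and those at $\tau=\pi$ are $\lesssim e^{-2z}z^{m-1}$ because near $\tau=\pi$ one has $e^{z(\cos\tau-1)}=e^{-2z}e^{z(1-\cos(\tau-\pi))}$, whose $\tau$-derivatives at $\tau=\pi$ are $e^{-2z}$ times polynomials in $z$ of degree $\le m-1$. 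Hence $z^{\frac{n-2}2}|\mathrm{I}|\lesssim e^{-2z}z^{m-1}$, so $|\mathrm{I}|\lesssim z^{m-1-\frac{n-2}2}e^{-2z}\lesssim e^{-z}$ for $z\ge1$, and \eqref{est:up3} follows.

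The main obstacle is $\mathrm{I}$: the naive differentiation of $e^{z(\cos\tau-1)}$ destroys all exponential decay, and $P^{-2m}\cos(\tau P)$ does not itself propagate at finite speed, so one must first isolate the polynomial-in-$\tau$ contribution of $P^{-2},\dots,P^{-2m}$ and then recover the decay from the parity of $e^{z(\cos\tau-1)}$ at $\tau=0$ together with the factor $e^{-2z}$ it carries at $\tau=\pi$.
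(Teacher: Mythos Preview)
Your argument is correct, and your handling of the term $\mathrm{I}$ is in fact sharper than the paper's. The paper's proof simply asserts that ``by the finite speed of propagation, the first term of \eqref{est:up3} vanishes due to $\delta>\pi$,'' and then estimates only the $\cosh$ integral exactly as you do for $\mathrm{II}$. But finite propagation applies to the wave group $\cos(\tau P)$, not to $P^{-2m}\cos(\tau P)$: the latter is $\cos(\tau P)$ composed with the negative-order smoothing operator $P^{-2m}$, and its Schwartz kernel has no reason to vanish for $\tau<d_h(y,y')$. Your Taylor-remainder decomposition makes this explicit: for $\tau\in[0,\pi]$ and $d_h(y,y')>\pi$ that kernel equals the even polynomial
\[
Q(\tau)=\sum_{j=0}^{m-1}\frac{(-1)^j\tau^{2j}}{(2j)!}\,[P^{-2(m-j)}](y,y'),
\]
which is generically nonzero (the constant term is $[P^{-2m}](y,y')$), so $\mathrm{I}$ does not vanish. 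What saves the estimate is exactly what you do next: integrating $\int_0^\pi(\partial_\tau)^{2m}(e^{z(\cos\tau-1)})\,Q(\tau)\,d\tau$ by parts $2m$ times annihilates the interior term since $\deg Q\le 2m-2$, the evenness of both $e^{z(\cos\tau-1)}$ and $Q$ about $\tau=0$ kills the left-endpoint contributions, and at $\tau=\pi$ the factor $e^{z(\cos\tau-1)}=e^{-2z}e^{z(1-\cos(\pi-\tau))}$ forces each surviving boundary term to be $O(e^{-2z}z^{m-1})$. Thus your proof repairs a genuine gap in the paper's argument while arriving at the same conclusion $|\mathrm{I}|+|\mathrm{II}|\lesssim e^{-z}$.
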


\begin{proof}
By the finite speed of propagation, the first term of \eqref{est:up3} vanishes due to $\delta>\pi$. 
Arguing as before, it is easy to see
 \begin{equation*}
\text{LHS of}\,\eqref{est:up3}\lesssim z^{-\frac{n-2}2} z^{2m}e^{-\frac{5}4z}\sum_{k\in\N}(1+k)^{\frac{n-2}{n-1}}\mu_k^{-2m}
 \int_0^\infty 
e^{-\frac z4\cosh \tau}  d\tau\leq C_2 e^{-z}.
 \end{equation*}
We choose $m$ large enough to ensure the summation converges, and the factor $z^{2m}$ can be absorbed by $e^{-\frac z4}$.
\end{proof}

\begin{center}

\end{center}

\end{document}